\newtheorem{theorem}{Theorem}[section]
\newtheorem{lemma}[theorem]{Lemma}
\newtheorem{corollary}[theorem]{Corollary}
\newtheorem{definition}[theorem]{Definition}
\newtheorem{remark}[theorem]{Remark}
\journal{Transportation Research Part B}
\begin{document}

\begin{frontmatter}



 \begin{center}
 \textcolor{blue}{ARTICLE LINK:  http://www.sciencedirect.com/science/article/pii/S0191261514000022
\\  PLEASE CITE THIS ARTICLE AS\\ 
Han, K., Gayah, V.V., Piccoli, B., Friesz, T.L., Yao, T., 2015. On the continuum approximation of the on-and-off signal control on dynamic traffic networks. \\Transportation Research Part B 61, 73-97.}
 \line(1,0){469}
 \end{center}

\title{On the continuum approximation of the on-and-off signal control on dynamic traffic networks}


\author[ic]{Ke Han \corref{cor}}
\ead{k.han@imperial.ac.uk}

\author[cee]{Vikash V. Gayah}
\ead{gayah@engr.psu.edu}

\author[math]{Benedetto Piccoli}
\ead{piccoli@camden.rutgers.edu}

\author[ie]{Terry L. Friesz} 
\ead{tfriesz@psu.edu}

\author[ie]{Tao Yao}
\ead{tyy1@engr.psu.edu}

\cortext[cor]{Corresponding author}

\address[ic]{Department of Civil and Environmental Engineering, Imperial College London, United Kingdom.}
\address[cee]{Department of Civil and Environmental Engineering, Pennsylvania State University, USA.}
\address[math]{Department of Mathematical Sciences
and CCIB, Rutgers University - Camden, USA}
\address[ie]{Department of Industrial and Manufacturing Engineering, Pennsylvania State University, USA.}

\begin{abstract}
In the modeling of traffic networks, a signalized junction is typically treated using a binary variable to model the on-and-off nature of signal operation. While accurate, the use of binary variables can cause problems when studying large networks with many intersections. Instead, the signal control can be approximated through a continuum approach where the on-and-off control variable is replaced by a priority parameter. Advantages of such approximation include elimination of the need for binary variables, lower time resolution requirements, and more flexibility and robustness in a decision environment. It also resolves the issue of discontinuous travel time functions arising from the context of dynamic traffic assignment.  

Despite these advantages in application, it is not clear from a theoretical point of view how accurate is such continuum approach; i.e., to what extent is this a valid approximation for the on-and-off case.  The goal of this paper is to answer these basic research questions and provide further guidance for the application of such continuum signal model.  In particular, by employing the {\it Lighthill-Whitham-Richards} model \citep{Lighthill and Whitham, Richards} on a traffic network, we investigate the convergence of the on-and-off signal model to the continuum model in regimes of diminishing signal cycles. We also provide numerical analyses on the continuum approximation error when the signal cycles are not infinitesimal.  As we explain, such convergence results and error estimates depend on the type of fundamental diagram assumed and whether or not vehicle spillback occurs in a network. Finally,  a traffic signal optimization problem is presented and solved which illustrates the unique advantages of applying the continuum signal model instead of the on-and-off one.
\end{abstract}

\begin{keyword}
traffic signal \sep continuum approximation \sep LWR model \sep convergence \sep approximation error \sep vehicle spillback
  
\end{keyword}

\end{frontmatter}

\section{\label{Intro}Introduction}

Signalized intersections play a vital role in the design, management and control of urban traffic networks. These locations are often the most restrictive bottlenecks, and therefore urban traffic control strategies tend to focus on the operation of signalized intersections \citep{Miller1963, Robertson1974, Shelby2004, CP, Guler2012, Gayah2012}. Thus, it is imperative that we are able to accurately predict traffic dynamics at these locations, and the resulting impact on a network. Fortunately, modeling these common junctions is relatively straightforward: for a given movement at an intersection, the impact of the signal on traffic dynamics is incorporated using a single binary variable. When the signal is green for the subject movement, the binary variable allocates the entirety of the downstream link's capacity to the downstream end of the subject approach. When the signal is red, the binary variable ensures that this capacity is zero. 

Unfortunately, the discrete nature of this `on-and-off' signal timing makes studying and optimizing the control parameters of these junctions rather complex, especially on large networks with many signalized intersections. Incorporating the binary traffic signal state variables in a signal optimization process usually results in mixed integer mathematical programs; examples include \citet{Improta and Cantarella}, \cite{Lo 1999a} and \cite{Lo 1999b}. For large networks, these mixed integer mathematical programs can be very difficult to solve exactly. Even when possible, the solutions require a tremendous amount of time, which makes real-time applications impossible. Realistic extensions that account for the combination of {\it dynamic traffic assignment} (DTA) with signal optimization, such as the so-called {\it dynamic user equilibrium with signal control} (DUESC) problems, become especially difficult \citep{Aziz and Ukkusuri, Ukkusuri et al}. 

To simplify the modeling of signalized intersections in networks, recent studies have proposed an elegant continuum model to approximate traffic dynamics at traffic signals in an intuitive way \citep{Smith, GZ}. This model works as follows. Consider a simple merge junction with two incoming links, $I_1$ and $I_2$, and one outgoing link, $I_3$, as depicted in Figure~\ref{figsimplemerge}. Assume now that the junction $A$ is controlled by a fixed-cycle traffic signal which controls the movement of the exit flows on links $I_1$ and $I_2$. The receiving capacity of the outgoing link $I_3$ is assumed to be time-dependent and given by the supply function $S_3(t)$. Additionally, the fraction of the cycle dedicated to link $I_1$ is given by $\eta$, and the fraction dedicated to link $I_2$ is $1-\eta$ for some $\eta\in(0,\,1)$. The continuum model asserts that the proportions $\eta$ and $1-\eta$ of the downstream link capacity $S_3(t)$ are assigned to links $I_1$ and $I_2$, respectively, during the entirety of the signal cycle. A more detailed and formal definition of such model will be provided later in Section \ref{secSignal}.

While this continuum model will not predict traffic dynamics at the intersection exactly, it does have a number of advantages when compared to the on-and-off signal model that is typically used: 
\begin{itemize}
\item  In a discrete-time setting, the binary representation of signal control strategies will be replaced with a real-valued parameter $\eta$. This eliminates the need of using binary variables for the signalization, and significantly reduces the computational burden of the mixed integer programs such as those reviewed above. 

 \item  The on-and-off signal model usually demands a very fine time resolution to accommodate certain signal splits. For example, a cycle with 35 seconds of green phase and 25 seconds of red phase requires a time step of at most 5 seconds to be properly implemented. These fine time resolutions increase the computational requirements of the network simulations and/or optimizations. On the other hand such constraints do not apply to the continuum case, thus one has more flexibility in choosing the time step for computational convenience and efficiency.  

\item For any fixed-cycle traffic signal optimization problem defined on a prescribed time grid, the on-and-off signal strategy can only take on several discrete values, while the continuum model yields a continuous spectrum of choices and outcomes.

\item  The on-and-off signal control naturally results in discontinuities in travel time functions, which poses difficulties in quite  a few dynamic traffic assignment models. For example, a dynamic user equilibrium problem \citep{LWRDUE} cannot be properly defined with the on-and-off signal controls unless some sort of indifference of drivers in travel time is introduced \citep{SL2006, GZ, Handissertation}. Such obstacle can be easily avoided by the continuum signal model.

\end{itemize}

Despite the appealing features of the continuum signal model mentioned above, the model has never been rigorously analyzed in connection with its counterpart, the on-and-off model. From an application point of view, it is of fundamental importance to identify circumstances where such continuum approximation accurately describes the aggregate behavior that exists at signalized intersections and, perhaps more importantly, to identify when it is invalid and may induce significant error. It is also worthwhile to investigate  to what extent the continuum model is a good approximation of the on-and-off signal control. Solving these objectives can help to identify situations where this approximation can be used, and when its advantages can be realized without sacrificing model accuracy. This serves as the motivation of the current paper and are fully addressed by the findings made herein.

Our analysis of the continuum signal model is based on the network extension of the Lighthill-Whitham-Richards conservation law model that explicitly captures the temporal and spatial distributions of congestion as well as vehicle spillback. This paper employs the most general assumptions on the fundamental diagram to ensure the validity of the convergence result and the error estimates. Our specific findings and/or contributions regarding the continuum signal model are summarized as follows.

\begin{itemize}

\item[1(a).] For a signalized network, if no spillback\footnote{In this paper, spillback refers to the situation where the entrance of a link is in the congested phase, causing its supply to drop and affecting the outcome of the junction as well as its immediately preceding links.} occurs on any junction, then the traffic evolution on the network using the on-and-off signal model converges to the solution using a corresponding continuum signal model as the signal cycles tend to zero.  This is true for any type of fundamental diagram assumed.

\item[1(b).] In application, when the signal cycles are not infinitesimal, the difference between the aforementioned two solutions are uniformly bounded if spillback does not occur anywhere in the network. This is again true for any type of fundamental diagram assumed. In addition,  expression of such uniform bound is provided explicitly.

\item[2(a).] If spillback occurs at some junction and lasts for a significant period (e.g., on the scale of several signal cycles), then the above convergence does not hold if the fundamental diagram is triangular, while the convergence continues to hold if the fundamental diagram has a strictly concave congested branch.

\item[2(b).] Under the same assumption of 2(a), and that the signal cycles are not infinitesimal, the difference between the two solutions grows with time, regardless of the fundamental diagram assumed. However, when using a fundamental diagram with a strictly concave congested branch, the difference is significantly smaller than when a triangular fundamental diagram is assumed. Again, the differences are provided explicitly for any type of fundamental diagram.

\item[3.] If spillback takes place and recurs on a smaller time scale (e.g., smaller than a signal cycle), which we call {\it transient spillback}, then the convergence of the solution with the on-and-off signal model to the one with the continuum signal model does not hold, when the signal cycles tend to zero. Moreover, the approximation error may be very large and grows with time when the signal cycles are not infinitesimal. These statements are true for any fundamental diagram.

\item[4.] We provide a traffic signal optimization procedure in the form of a mixed integer linear program which employs either the continuum traffic signal model or the on-and-off signal model. Performances and results of these two programs are compared which highlights the unique advantages of the continuum model and illustrates some of its solution characteristics in line with our theoretical results. 
\end{itemize}

The link dynamics employed in this paper are described by the following first order scalar conservation law:
\begin{equation}\label{LWRPDE}
\partial_t\,\rho(t,\,x)+\partial_x\, f\big(\rho(t,\,x)\big)~=~0 \qquad (t,\,x)\in [0,\,T]\times[a,\,b]
\end{equation}
where $[0,\,T]$ is some fixed time horizon and the link is expressed as a spatial interval $[a,\,b]$, $\rho(t,\,x)\in[0,\,\rho_j]$ denotes local vehicle density, where $\rho_{j}$ denotes the jam density. Regarding the fundamental diagram $f(\cdot):~[0,\,\rho_{j}]\rightarrow [0,\,C]$ where $C$ denotes the flow capacity, we impose the following very mild assumptions.\\

\noindent {\bf (F)} The fundamental diagram $f(\rho)$ is continuous and concave, and vanishes at $\rho=0$ and $\rho=\rho_{j}$. \\

\noindent Note that more restrictive constraints are sometimes used throughout this paper when considering the impacts of the continuum approximation when different functional forms of the fundamental diagram are considered.

Analysis of the conservation law models involves shocks and rarefaction waves, which are very case-sensitive especially in the presence of a family of signal controls. Therefore, attacking the proposed problem directly using conservation laws is quite difficult. As part of our contribution in methodology, we invoke the analytical framework of variational theory \citep{CC1, VT1, Newella} and weak value conditions \citep{ABP} to analyze the models of interest. As we shall demonstrate,  solution representation of the signalized network, asymptotic behavior of the solutions in regimes of diminishing signal cycles, as well as error estimates for the two types of signal models, are all tremendously simplified by considering the variational theory and the weak value conditions.

The rest of this paper is organized as follows. Section \ref{secSignal} introduces additional concepts and notations pertaining to signalized junctions, where the on-and-off and the continuum signal models are formally defined. Section \ref{secHJweak} reviews some essential background on variational theory and weak conditions. Section \ref{secwithoutspillback} establishes, in the absence of vehicle spillback, convergence of the on-and-off model to the continuum model as the signal cycles tend to zero. We also provide error estimates for the continuum approximation of the on-and-off model when the signal cycles are not infinitesimal. These results are independent of the type of fundamental diagram employed. In Section \ref{secwithspillback}, we conduct similar investigations of convergence and error estimates, assuming that spillback occurs and is sustained at a signalized intersection. The corresponding results are dependent on the type of fundamental diagram employed. Section \ref{secTransient} provides a discussion of convergence in the presence of transient and recurring spillback. Section \ref{secNumerical} supports the theoretical results from previous sections using numerical examples. In Section \ref{secApp}, we propose an application of the continuum signal model, namely, a mixed integer linear programming approach for optimal signal timing problem. This is used to illustrate the  modeling and computational advantages of the continuum signal model over the on-and-off one, as well as its solution characteristics and qualities. Finally, Section \ref{secConclusions} provides some concluding remarks.

\section{Signalized junction with fixed cycle and split}\label{secSignal}

In order to illustrate the key features of signalized junctions, we focus on a signalized merge node $A$, depicted in Figure \ref{figsimplemerge}. At this node, there are two incoming links, $I_1$ and $I_2$, and one outgoing link, $I_3$. We also note that an additional signal exists at the downstream node $B$ of link $I_3$, which will be used to discuss some queue spillbacks in Sections \ref{secwithspillback} and \ref{secNumerical}. Although all subsequent results are stated for such junction, extensions of our methodology and insights to more general junctions are straightforward, see more explanation in Section \ref{secConclusions}. 

In signal timing, a period containing one complete green phase and one complete red phase is called a cycle. The cycle length for $I_1$ (and also for $I_2$) is  denoted by $\Delta_A\in\mathbb{R}_{+}$.  Fix a split parameter $\eta_1\in(0,\,1)$. We let the green time for $I_1$ be $\eta_1 \Delta_A$ and the green time for $I_2$ be $(1-\eta_1)\Delta_A$ in a full cycle.

\begin{figure}[h!]
\centering
\includegraphics[width=.5\textwidth]{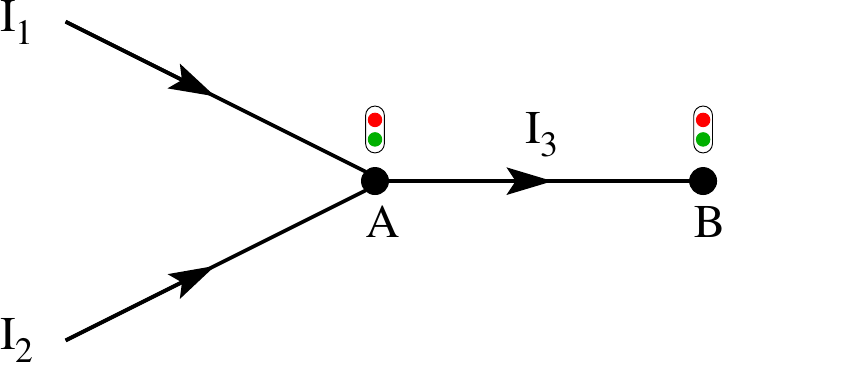}
\caption{A signalized merge junction.}
\label{figsimplemerge}
\end{figure}

\noindent Each link $I_i$  is expressed as a spatial interval $[a_i,\,b_i],\,i=1,2,3$. The density on each link is   $\rho_i(t,\,x),\,(t,\,x)\in[0,\,T]\times [a_i,\,b_i],\,i=1,2,3$. We define the demand functions $D_i(t)$ for $I_i$, $i=1,\,2$ and the supply function $S_3(t)$ for $I_3$  \citep{LK1999} as
\begin{align}
\label{demanddef}
D_i(t)~=~&\begin{cases}
C_i \qquad& \hbox{if}~~x=b_i-~~\hbox{is in the congested phase}
\\
f_i\big(\rho_i(t,\,b_i-)\big)\qquad & \hbox{if}~~x=b_i-~~\hbox{is in the uncongested  phase}
\end{cases}\qquad i=1,\,2
\\
\label{supplydef}
S_3(t)~=~&\begin{cases}
C_3 \qquad& \hbox{if}~~x=a_3+~~\hbox{is in the uncongested phase}
\\
f_3\big(\rho_3(t,\,a_3+)\big)\qquad & \hbox{if}~~x=a_3+~~\hbox{is in the congested  phase}
\end{cases}
\end{align}
where $f_i(\cdot)$ is the fundamental diagram, $C_i$ denotes the flow capacity, $i=1,\,2,\,3$. Based on $S_3(t)$, we define the {\it effective supplies} $\mathcal{S}_3^1(t)$ and $\mathcal{S}_{3}^2(t)$ associated with link $I_1$ and $I_2$ respectively as 
\begin{equation}\label{effsupply}
\mathcal{S}_3^1(t)~\doteq~\min\big\{C_1,\,S_3(t)\big\},\qquad  \mathcal{S}_3^2(t)~\doteq~\min\big\{C_2,\,S_3(t)\big\}
\end{equation}
Quantities $\mathcal{S}_3^1(t)$ and $\mathcal{S}_3^2(t)$ represent, respectively, the capacity provided by the downstream link that is available for $I_1$ and $I_2$ to utilize. They will be used to define the on-and-off and the continuum signal models as below.

We consider the periodic,  piecewise constant control functions $u_1(t)$ and $u_2(t):[0,\,T]\to \{0,\,1\}$, such that 
\begin{equation}\label{control1}
u_1(t)~=~\begin{cases} 1 \qquad\qquad &\hbox{when the signal is green for } I_1
\\
0 \qquad\qquad &\hbox{when the signal is red for } I_1
\end{cases}
\end{equation}
\begin{equation}\label{control2}
u_2(t)~=~\begin{cases} 1 \qquad\qquad &\hbox{when the signal is green for } I_2
\\
0 \qquad\qquad &\hbox{when the signal is red for } I_2
\end{cases}
\end{equation}
One obvious identity that must be satisfied by these controls is $u_1(t)+u_2(t)\equiv 1,\,\forall t\in[0,\,T]$.  One can now write the boundary flows corresponding to an on-and-off control as
\begin{equation}\label{oaodef}
\left.\begin{array}{l}
 f_1\big(\rho_1(t,\,b_1)\big)~=~\min\big\{D_1(t),~~ \mathcal{S}^1_3(t)\cdot u_1(t)\big\}
\\
f_2\big(\rho_2(t,\,b_2)\big)~=~\min\big\{D_2(t),~~ \mathcal{S}^2_3(t)\cdot u_2(t)\big\}
\\
 f_3\big(\rho_3(t,\,a_3)\big)~=~ f_1\big(\rho_1(t,\,b_1)\big)+f_2\big(\rho_2(t,\,b_2)\big)
\end{array}
\right\}\hbox{On-and-Off Signal Model}
\end{equation}
where $f_1\big(\rho_1(t,\,b_1)\big)$, $f_2\big(\rho_2(t,\,b_2)\big)$ denote the exit flows of $I_1$ and $I_2$; $ f_3\big(\rho_3(t,\,a_3)\big)$ denotes the inflow of $I_3$.  On the other hand, the continuum signal model states that
\begin{equation}\label{continuumdef}
\left.\begin{array}{l}
f_1\big(\rho_1(t,\,b_1)\big)~=~\min\big\{D_1(t),~~ \eta_1\mathcal{S}_3^1(t)\big\}
\\
f_2\big(\rho_2(t,\,b_2)\big)~=~\min\big\{D_2(t),~~ (1-\eta_1)\mathcal{S}_3^2(t)\big\}
\\
 f_3\big(\rho_3(t,\,a_3)\big)~=~ f_1\big(\rho_1(t,\,b_1)\big)+f_2\big(\rho_2(t,\,b_2)\big)
\end{array}
\right\}\hbox{Continuum Signal Model}
\end{equation}

\begin{remark}
It should be noted that we assume here, as in nearly all first-order traffic flow models, that vehicles accelerate and decelerate instantaneously. Of course, acceleration rates are bounded in reality, and this complication will introduce an additional source of error. In practice, this is usually accounted for by including lost times at the signal where flow is zero, and modeling saturation flows during the effective green time. The methodological framework presented in this paper can be easily modified to incorporate the inclusion of lost times and/or yellow times: simply relax the assumption that the sum of the priority parameters is equal to one and instead let this sum be equal to the fraction of the cycle during which vehicles are allowed to discharge at saturation. This fraction can usually be detrained fairly easily in practice for a given cycle length.
\end{remark}

Although subsequent analyses regarding convergence and error estimate are only stated for link $I_1$,  the treatment of $I_2$ is completely symmetric and quite similar.

\section{The Hamilton-Jacobi equation and the weak boundary conditions}\label{secHJweak}

We introduce the Moskowitz function $N(t,\,x)$ \citep{Moskowitz} , also know as the Newell-curve \citep{Newella}, which measures the cumulative number of vehicles that have passed location $x$ by time $t$. The function $N(t,\,x)$ satisfies the following Hamilton-Jacobi equation
\begin{equation}\label{HJE}
\partial_t N(t,\,x)-f\big(- \partial_x N(t,\,x)\big)~=~0\qquad (t,\,x)\in[0,\,T]\times[a,\,b]
\end{equation}
subject to initial condition, upstream and downstream boundary conditions, to be defined below.

\subsection{The generalized Lax-Hopf formula}
Our analysis of the signalized junction involves a semi-analytical solution representation of the Hamilton-Jacobi equation \eqref{HJE} known as the {\it generalized Lax-Hopf formula} \citep{ABP, CC1, CC2}. For the convenience of invoking weak conditions, we employ a class of lower-semicontinuous viability episolutions of \eqref{HJE} in the sense of Barron-Jensen/Frankowska \citep{BJ, Frankowska}.

Let us fix a temporal-spatial domain $[0,\,T]\times [a,\,b]$ where $[0,\,T]$ is the time horizon, $b-a=L$ is the length of the link. The articulation of the generalized Lax-Hopf formula requires the following definition of value conditions.

\begin{definition}
A value condition $\mathcal{C}(\cdot,\,\cdot)$ is a lower-semicontinuous function that maps $\Omega$, a subset of $[0,\,T]\times[a,\,b]$, to $\mathbb{R}$. 
\end{definition}

\begin{theorem}{\bf (Generalized Lax-Hopf formula)}
The viability episolution to \eqref{HJE} associated with value condition $\mathcal{C}(\cdot,\,\cdot)$ is given by 
\begin{equation}\label{gLH}
N_{\mathcal{C}}(t,\,x)~=~\inf_{(u,\,\tau)\in Dom(f^*)\times \mathbb{R}_+}\big\{\mathcal{C}(t-\tau,\,x-\tau u)+\tau f^*(u)\big\}
\end{equation}
where $f^*(\cdot)$ is the concave transformation of the Hamiltonian $f(\cdot)$:
$$
f^*(u)~=~\sup_{\rho\in [0,\,\rho_{j}]}\big\{f(\rho)-u\rho\big\}
$$
and $Dom(f^*)\doteq [f'(0+),\,f'(\rho_j-)]$.
\end{theorem}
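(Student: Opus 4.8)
The plan is to realize \eqref{gLH} as the value function of a variational (optimal-control) problem and then to identify that value function with the viability episolution of \eqref{HJE} through the Hopf–Lax semigroup structure. The enabling algebraic fact comes from assumption {\bf (F)}: since $f$ is concave, $f^*(u)=\sup_{\rho\in[0,\rho_j]}\{f(\rho)-u\rho\}$ is a supremum of affine functions of $u$, hence convex and lower-semicontinuous with effective domain an interval, and biconjugation gives $f(\rho)=\min_{u\in Dom(f^*)}\{f^*(u)+u\rho\}$. Writing the Hamiltonian of \eqref{HJE} as $H(p)=-f(-p)$, one checks $H$ is convex and $H^*=f^*$, so $f^*$ is exactly the Lagrangian governing the running cost along straight characteristic paths of velocity $u$. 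I would then prove the two opposing inequalities between $N_{\mathcal C}$ and the right-hand side $W(t,x)$ of \eqref{gLH}.

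For $N_{\mathcal C}\le W$, fix an admissible velocity $u\in Dom(f^*)$ and a backward time $\tau\ge 0$ with $(t-\tau,\,x-\tau u)\in\Omega$, and follow the straight path $s\mapsto(s,\,x-(t-s)u)$ for $s\in[t-\tau,t]$. Setting $g(s)=N_{\mathcal C}(s,\,x-(t-s)u)$ and $\rho=-\partial_x N_{\mathcal C}$, equation \eqref{HJE} gives, at points of differentiability,
\begin{equation}
g'(s)~=~\partial_t N_{\mathcal C}+u\,\partial_x N_{\mathcal C}~=~f(\rho)-u\rho~\le~f^*(u).
\end{equation}
Because the episolution is Lipschitz, $g$ is absolutely continuous, and integrating over $[t-\tau,t]$ yields $N_{\mathcal C}(t,x)\le \mathcal C(t-\tau,x-\tau u)+\tau f^*(u)$; taking the infimum over admissible $(u,\tau)$ gives $N_{\mathcal C}\le W$. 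The only subtlety is that the a.e.\ differentiation along characteristics is legitimate in the Barron–Jensen/Frankowska class, which is standard for the Lipschitz viability episolution.

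The reverse inequality $N_{\mathcal C}\ge W$ is the substantive step and, in my view, the main obstacle. One must show both that the infimum in \eqref{gLH} is attained along an optimal backward trajectory reaching $\Omega$ and that this trajectory may be taken to be a single constant-velocity segment. The reduction to straight segments is precisely where concavity is used: since $f^*$ is convex, Jensen's inequality shows that replacing any admissible curve by the straight chord through its endpoints does not increase $\int f^*(\dot x)\,ds$, so only the constant-velocity paths in \eqref{gLH} matter; along the optimizing speed $u^*=f'(\rho^*)$ (the Legendre maximizer) the bound $g'\le f^*(u)$ holds with equality, which makes $W$ sharp.

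To make this rigorous without assuming smoothness of $N_{\mathcal C}$, rather than arguing pointwise I would verify that $W$ is lower-semicontinuous, obeys the Hopf–Lax semigroup identity $W(t,x)=\inf_{u,\sigma}\{W(t-\sigma,x-\sigma u)+\sigma f^*(u)\}$, and respects the value condition weakly in the sense that $W\le\mathcal C$ on $\Omega$; all three are immediate from the inf-convolution form of \eqref{gLH} and the convexity of $f^*$. By the uniqueness of the viability episolution attached to a weak value condition established in \citep{ABP} (see also \citep{CC1,CC2}), these properties characterize $N_{\mathcal C}$, forcing $W=N_{\mathcal C}$ and completing the proof. Equivalently, the entire argument can be recast geometrically by identifying $\mathrm{epi}(W)$ with the viability kernel of the augmented dynamics $(\dot t,\dot x,\dot z)=(-1,-u,-f^*(u))$ relative to the constraint set $\mathrm{epi}(\mathcal C)$, whose optimal trajectories are exactly the straight characteristic segments producing formula \eqref{gLH}.
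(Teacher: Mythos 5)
The paper does not actually prove this theorem: its ``proof'' is a one-line citation to \cite{ABP}, so there is no internal argument to compare yours against step by step. Your sketch is broadly the viability-theoretic argument of that reference (the closing remark identifying $\mathrm{epi}(W)$ with the capture basin of $\mathrm{epi}(\mathcal{C})$ under $(\dot t,\dot x,\dot z)=(-1,-u,-f^*(u))$ \emph{is} essentially how \cite{ABP} proceeds), but as written it has two genuine gaps.

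First, the upper bound $N_{\mathcal{C}}\le W$ relies on writing $g(s)=N_{\mathcal{C}}(s,\,x-(t-s)u)$, asserting that $g$ is absolutely continuous ``because the episolution is Lipschitz,'' and applying the chain rule a.e. But the theorem is stated for an arbitrary lower-semicontinuous value condition $\mathcal{C}$ on an arbitrary subset $\Omega$, and the resulting Barron--Jensen/Frankowska episolution need not be Lipschitz or even continuous --- handling discontinuous data is the entire reason this solution class is used (and is relevant to this paper, where on-and-off controls produce kinks and the value conditions live on lower-dimensional sets). Even granting Lipschitz regularity, the PDE holds a.e.\ in $(t,x)$, which by Fubini gives differentiability along almost every line of slope $u$, not along the particular line through the given $(t,x)$; one needs an approximation or a sub/superdifferential (viability) argument to close this. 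Second, your route to the reverse inequality is not a characterization: a lower-semicontinuous function satisfying the Hopf--Lax semigroup identity and $W\le\mathcal{C}$ on $\Omega$ is not unique (e.g.\ $W\equiv-\infty$ satisfies both), so invoking ``uniqueness of the viability episolution'' at that point is circular unless you add the maximality property that defines the episolution, namely that its epigraph is the full capture basin of $\mathrm{epi}(\mathcal{C})$. Your Jensen/chord argument showing that straight segments minimize $\int f^*(\dot x)\,ds$ is the correct key lemma, but it must be applied to that capture-basin definition (reducing arbitrary viable trajectories to constant-velocity ones), not bolted onto a comparison principle you have not established. Reorganized around the epigraphical definition from the start, your ingredients do assemble into the proof in \cite{ABP}.
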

\begin{proof}
The reader is referred to \cite{ABP} for a proof. 
\end{proof}

\subsection{The weak value conditions}

Let us introduce, for equation \eqref{HJE}, the initial condition $N_{ini}(x)$, the upstream boundary condition $N_{up}(t)$, and the downstream boundary condition $N_{down}(t)$. We make note of the fact that a viability episolution $N(t,\,x)$ given by \eqref{gLH} needs only satisfy the above conditions in an inequality $(\leq)$ sense. In other words, we have
\begin{equation}\label{inqconstraints1}
 N(0,\,x)~\leq~N_{ini}(x)\qquad\forall x\in[a,\,b]
\end{equation}
\begin{equation}\label{inqconstraints2}
N(t,\,a)~\leq~N_{up}(t),\qquad N(t,\,b)~\leq~N_{down}(t)\qquad \forall t \in[0,\,T]
\end{equation}
The reader is referred to \cite{ABP} and \cite{CC1} for more detailed explanation. 

\begin{remark}
Throughout this paper, $N_{up}(t)$ is taken as the time-integral of the demand function of the preceding link, rather than the time-integral of the link entry flow; similarly, $N_{down}(t)$ is taken as the time-integral of the supply function of the following link, rather than the time-integral of the link exit flow. Since the demand (supply) of preceding (following) link is usually larger than the actual inflow (outflow) of the link of interest, the boundary conditions $N_{up}(t)$ and $N_{down}(t)$ are satisfied only in an inequality ($\leq$) sense; in other words, they are weak boundary conditions \citep{ABP}. The advantage of invoking those weak boundary conditions is that solution representation (Lax-Hopf formula) on the link of interest can be obtained without knowledge of the actual inflow or outflow. Moreover, in order to draw any conclusion about the solution, it suffices to investigate $N_{up}(t)$ and $N_{down}(t)$. These two facts greatly simplify our analyses that will follow in Section \ref{secwithoutspillback} and \ref{secwithspillback}.
\end{remark}

In the presence of weak conditions $N_{ini}(x),\,N_{up}(t)$ and $N_{down}(t)$, the Lax-Hopf formula \eqref{gLH} can be instantiated as follows.

\begin{theorem}\label{thmLHib}{\bf(Lax-Hopf formula with weak initial and boundary conditions)}
Consider the Hamilton Jacobi equation \eqref{HJE} with a continuous, concave Hamiltonian $f(\cdot)$, and define $v\doteq f'(0+)$, $-w\doteq f'(\rho_j-)$. Given weak conditions $N_{ini}(x),\,N_{up}(t)$ and $N_{down}(t)$, the viability episolution given by \eqref{gLH} can be explicitly expressed as
\begin{align}
\label{explicitLH1}
N(t,\,x)~=~&\min_{u\in[-w,\,v]}A(u; t, x)\qquad (t,\,x)\in\Omega_I
\\
\label{explicitLH2}
N(t,\,x)~=~&\min\left\{\min_{u\in[{x-a\over t},\,v]}B(u; t, x),~~ \min_{u\in[-w,\,{x-a\over t}]}A(u; t, x)\right\} \qquad (t,\,x)\in\Omega_{II}
\\
\label{explicitLH3}
N(t,\,x)~=~&\min\left\{\min_{u\in[{x-b\over t},\,v]}A(u; t, x),~~ \min_{u\in[-w,\,{x-b\over t}]}C(u; t, x)\right\} \qquad (t,\,x)\in\Omega_{III}
\\
\label{explicitLH4}
N(t,\,x)~=~&\min\left\{\min_{u\in[{x-a\over t},\,v]}B(u; t, x),~ \min_{u\in[-w,\,{x-b\over t}]}C(u; t, x),~ \min_{u\in[{x-b\over t},\,{x-a\over t}]}A(u; t, x)\right\} \quad (t,\,x)\in\Omega_{IV}
\end{align}
where 
\begin{align}
A(u; t, x)~=~&N_{ini}(x-ut)+tf^*(u) 
\\
B(u; t, x)~=~&N_{up}\left(t-{x-a\over u}\right)+{x-a\over u} f^*(u)
\\
C(u; t, x)~=~&N_{down}\left(t-{x-b\over u}\right)+{x-b\over u}f^*(u)
\end{align}
and
\begin{equation}\label{Omegadef}
\begin{array}{r}
\Omega_I~=~\left\{(t,\,x)\in[0,\,T]\times[a,\,b]:~  x~\geq~a+vt ,~ x~\leq~b-wt \right\}
\\
\Omega_{II}~=~\left\{ (t,\,x)\in(0,\,T]\times[a,\,b]:~ x~<~a+vt,~ x~\leq~b-wt   \right\}
\\
\Omega_{III}~=~\left\{ (t,\,x)\in(0,\,T]\times[a,\,b]:~x~\geq~a+vt,~ x~>~b-wt   \right\}
\\
\Omega_{IV}~=~\left\{ (t,\,x)\in(0,\,T]\times[a,\,b]:~  x~<~a+vt,~ x~>~b-wt\right\}
\end{array}
\end{equation}
\end{theorem}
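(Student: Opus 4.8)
The plan is to specialize the generalized Lax-Hopf formula \eqref{gLH} by exploiting the geometric meaning of the pair $(u,\tau)$: it encodes a straight characteristic segment emanating backward from $(t,x)$ with slope $u$ and temporal length $\tau$, terminating at $(t-\tau,\,x-\tau u)$ and contributing the transport cost $\tau f^*(u)$. The combined value condition relevant here is the one carrying $N_{ini}$ on the initial line $\{t=0\}$, $N_{up}$ on the upstream edge $\{x=a\}$, and $N_{down}$ on the downstream edge $\{x=b\}$, and extended by $+\infty$ off this set $\Omega$. With that convention the infimum in \eqref{gLH} is effectively restricted to $(u,\tau)$ for which the terminal point lands exactly on $\Omega$, i.e. to backward characteristics that exit the space-time rectangle $[0,T]\times[a,b]$ through one of its three relevant edges. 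Equivalently, by the inf-morphism property one may treat the three pieces of data separately and take the pointwise minimum of the resulting episolutions; the three families then produce precisely the functions $A$, $B$, and $C$.

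Next I would compute, for each admissible slope $u\in[-w,v]=Dom(f^*)$, the value of $\tau$ at each possible exit. A characteristic reaching the initial line has $\tau=t$ and terminal point $(0,x-ut)$, giving $A(u;t,x)=N_{ini}(x-ut)+tf^*(u)$. One reaching the upstream edge satisfies $x-\tau u=a$, hence $\tau=(x-a)/u$ (which is positive only for $u>0$), giving $B(u;t,x)$; one reaching the downstream edge satisfies $x-\tau u=b$, hence $\tau=(x-b)/u$ (positive only for $u<0$), giving $C(u;t,x)$. I would then read off which exit actually occurs by comparing $u$ with the two critical slopes $(x-a)/t$ and $(x-b)/t$: the backward ray leaves through the upstream edge rather than the bottom precisely when $u>(x-a)/t$, and through the downstream edge precisely when $u<(x-b)/t$; otherwise it exits through the initial line. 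Rays that would overshoot an edge simply land off $\Omega$ and contribute $+\infty$, so they are automatically discarded.

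The four regions then fall out by intersecting these critical-slope conditions with the admissible range $[-w,v]$, using that $x\in[a,b]$ places the thresholds conveniently (indeed $(x-a)/t\ge 0$ and $(x-b)/t\le 0$). In $\Omega_I$, where $x\ge a+vt$ and $x\le b-wt$, one has $(x-a)/t\ge v$ and $(x-b)/t\le -w$, so every admissible slope exits through the bottom and only $A$ survives, yielding \eqref{explicitLH1}. In $\Omega_{II}$ the upstream threshold $(x-a)/t$ moves inside $[-w,v]$ while the downstream one stays $\le -w$, splitting $[-w,v]$ into an upper part governed by $B$ and a lower part by $A$, giving \eqref{explicitLH2}; $\Omega_{III}$ is the symmetric downstream case \eqref{explicitLH3}; and $\Omega_{IV}$, where both thresholds are interior, partitions $[-w,v]$ into three subintervals assigned to $B$, $A$, $C$ (with the ordering $(x-b)/t\le(x-a)/t$ guaranteed by $a<b$), producing \eqref{explicitLH4}. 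Compactness of $[-w,v]$ together with lower semicontinuity of the data upgrades each infimum to a minimum.

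I expect the main obstacle to be the rigorous justification of the reduction to exit-point characteristics and of the restriction $u\in[-w,v]$. One must confirm, using the inequality (viability) form of the weak conditions \eqref{inqconstraints1}--\eqref{inqconstraints2} and the subadditivity of the value along characteristics implied by the concavity of $f$, that truncating at the first edge crossing neither loses nor spuriously creates minimizers, and that slopes outside $Dom(f^*)$ never improve the infimum. Once this structural point is settled, the remaining work is the attention-demanding but routine bookkeeping of verifying that each threshold slope is assigned to the correct subinterval, consistent with the shared (closed) endpoints appearing in \eqref{explicitLH2}--\eqref{explicitLH4}, where the competing terms coincide because the characteristic passes through a corner of $\Omega$.
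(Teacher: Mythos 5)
Your proposal is correct and follows essentially the same route as the paper's own (much terser) proof: partition $[0,T]\times[a,b]$ into the four subregions according to which of the initial, upstream, and downstream data can influence $(t,x)$, and then instantiate the generalized Lax-Hopf formula \eqref{gLH} on each piece, with the backward-characteristic exit analysis yielding exactly the terms $A$, $B$, $C$ and their slope ranges. The paper leaves the bookkeeping implicit, whereas you carry it out explicitly, but there is no substantive difference in approach.
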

\begin{proof}
The speeds of the kinematic waves fall within the interval $[-w,\,v]$, thus the temporal-spatial domain $[0,\,T]\times[a,\,b]$ can be partitioned into four parts, depending on whether or not a point $(t,\,x)$ can be influenced by the initial, upstream boundary, and downstream boundary conditions; see Figure \ref{figdomains1} for an illustration. To establish \eqref{explicitLH1}-\eqref{explicitLH4}, it suffices, for each subregion, to locate the domain of influence and apply the Lax-Hopf formula \eqref{gLH}. 
\end{proof}

\begin{figure}[h!]
\centering
\includegraphics[width=.5\textwidth]{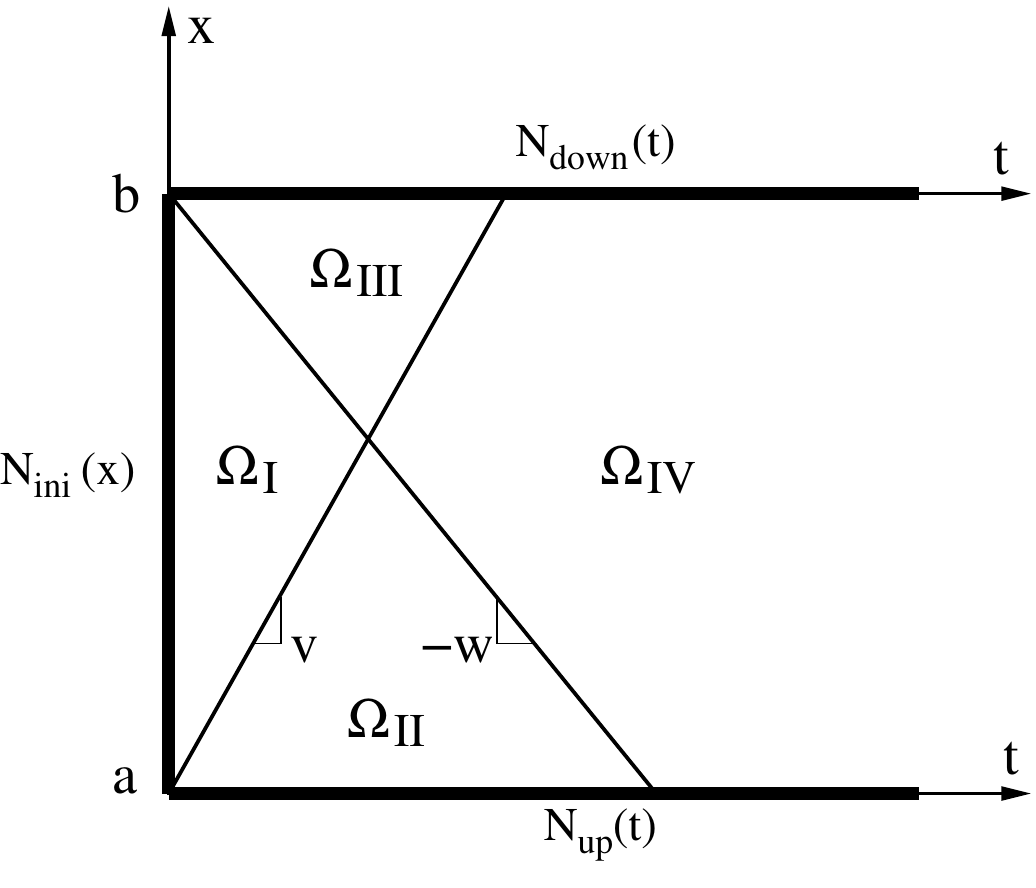}
\caption{Partition of the temporal-spatial domain into four subregions. In $\Omega_I$, the solution is influenced by $N_{ini}(\cdot)$ only; in $\Omega_{II}$, the solution is influenced by $N_{up}(\cdot)$ and $N_{ini}(\cdot)$ only; in $\Omega_{III}$, the solution is influenced by $N_{down}(\cdot)$ and $N_{ini}(\cdot)$; and in $\Omega_{IV}$, the solution is influenced by $N_{ini}(\cdot)$, $N_{up}(\cdot)$ and $N_{down}(\cdot)$.}
\label{figdomains1}
\end{figure}

In view of the signalized junction in Figure \ref{figsimplemerge} and the two signal models expressed in \eqref{oaodef} and \eqref{continuumdef}, we define the weak downstream boundary conditions for the H-J equation on link $I_1$ respectively as
\begin{equation}\label{NDeltadef}
N_{down}^{\Delta_A}(t)~\doteq~\int_0^t \mathcal{S}_3^1(\tau)\,u_1(\tau)\,d\tau \qquad\hbox{(the on-and-off case)}
\end{equation}
and 
\begin{equation}\label{N0def}
N_{down}^0(t)~\doteq~\int_0^t \eta_1 \mathcal{S}_3^1(\tau)\,d\tau  \qquad\hbox{(the continuum case)}
\end{equation}

\section{Convergence results and error estimation in the absence of spillback}\label{secwithoutspillback}

In this section, under the assumption that no spillback occurs at the signalized junction, we are interested in finding out the asymptotic behavior of the on-and-off signal model, and whether or not it converges to the continuum model, when the signal cycle length tends to zero. As we shall explain, the no-spillback assumption is crucial for the established results below. The case with spillback will be presented later in Section \ref{secwithspillback}. Note that all the results presented in the rest of this section are valid for any type of fundamental diagram as long as the very mild assumptions {\bf (F)} mentioned in the introduction are satisfied.

\subsection{Convergence result without spillback}\label{secconvns}
Let us re-visit the merge junction $A$ depicted in Figure \ref{figsimplemerge}. The absence of spillback at node A implies that the entrance of link $I_3$ remains in the uncongested phase. In other words, the supply function $S_3(t)$ of $I_3$ is equal to its flow capacity. The following convergence theorem holds under such circumstance.

\begin{theorem}\label{thmconvns}
Consider the merge junction depicted in Figure \ref{figsimplemerge}, and a signal control $u_1(t)$ for link $I_1$ with cycle  $\Delta_A$ and  split parameter $\eta_1\in(0,\,1)$.  We let $N_{up}(t)$ and $N_{ini}(x)$ be the weak upstream boundary condition and initial condition for the H-J equation \eqref{HJE} on $I_1$. Let $N^{\Delta_A}(t,\,x)$ and $N^0(t,\,x)$ be the solutions of the H-J equation with additional downstream boundary conditions $N_{down}^{\Delta_A}(t)$ and $N^0_{down}(t)$ respectively, where $N^{\Delta_A}_{down}(t)$ and $N^0_{down}(t)$ are given by \eqref{NDeltadef} and \eqref{N0def}.   Furthermore, assume that the entrance of link $I_3$ remains in the uncongested phase. Then $N^{\Delta_A}(t,\,x) \to N^0(t,\,x)$ uniformly for all $(t,\,x)\in[0,\,T]\times[a_1,\,b_1]$, as $\Delta_A\rightarrow 0$.
\end{theorem}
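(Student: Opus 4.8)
The plan is to reduce the convergence of the full Hamilton-Jacobi solutions to the uniform convergence of their \emph{downstream} boundary data, exploiting two facts: that the no-spillback hypothesis makes the effective supply on $I_1$ exogenous and identical in both models, and that the generalized Lax-Hopf formula depends on its value conditions in a non-expansive fashion.

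First I would exploit the no-spillback assumption to decouple $I_1$ from the rest of the network. Since the entrance $x=a_3+$ of $I_3$ stays uncongested, the supply definition~\eqref{supplydef} forces $S_3(t)\equiv C_3$, so by~\eqref{effsupply} the effective supply $\mathcal{S}_3^1(t)\equiv\min\{C_1,C_3\}$ is a positive constant, which I denote $\sigma_1$; crucially, it is the \emph{same} constant in the on-and-off and continuum models. (This is exactly where no-spillback is indispensable: with spillback, $S_3$ would depend on the model-dependent downstream state and couple the two problems.) Consequently the two downstream data~\eqref{NDeltadef} and~\eqref{N0def} differ only through the control, and
\[
\sup_{t\in[0,T]}\bigl|N_{down}^{\Delta_A}(t)-N_{down}^0(t)\bigr|
~=~\sigma_1\,\sup_{t\in[0,T]}\Bigl|\int_0^t u_1(\tau)\,d\tau-\eta_1 t\Bigr|.
\]
I would then bound the right-hand side by an elementary duty-cycle estimate: the function $\phi(t)\doteq\int_0^t u_1(\tau)\,d\tau-\eta_1 t$ vanishes at every cycle boundary $t=k\Delta_A$ (each full cycle contributes exactly $\eta_1\Delta_A$ of green) and within one cycle rises and falls with slopes $1-\eta_1$ and $-\eta_1$, so $\sup_t|\phi(t)|\le\eta_1(1-\eta_1)\Delta_A$. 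Hence
\[
\sup_{t\in[0,T]}\bigl|N_{down}^{\Delta_A}(t)-N_{down}^0(t)\bigr|
~\le~\sigma_1\,\eta_1(1-\eta_1)\,\Delta_A\xrightarrow[\Delta_A\to0]{}0,
\]
i.e.\ the downstream data converge uniformly at rate $O(\Delta_A)$.

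Finally I would transfer this to the solutions through the solution representation. The two problems share the same $N_{ini}$ and $N_{up}$ and differ only in the downstream datum, which in the explicit formulas~\eqref{explicitLH3}--\eqref{explicitLH4} (equivalently in~\eqref{gLH}) enters \emph{only} through the term $C(u;t,x)$, and does so \emph{additively}, inside an infimum over $u$-ranges that depend on the region $\Omega_{I}$--$\Omega_{IV}$ but not on the boundary values themselves. In particular the discrepancy is identically zero on $\Omega_I\cup\Omega_{II}$ (where no $C$-term appears), while on $\Omega_{III}\cup\Omega_{IV}$ the elementary inequality $|\inf_i g_i-\inf_i h_i|\le\sup_i|g_i-h_i|$ shows that the map $N_{down}\mapsto N(\cdot,\cdot)$ is non-expansive in the sup-norm. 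Therefore
\[
\sup_{(t,x)\in[0,T]\times[a_1,b_1]}\bigl|N^{\Delta_A}(t,x)-N^0(t,x)\bigr|
~\le~\sup_{t\in[0,T]}\bigl|N_{down}^{\Delta_A}(t)-N_{down}^0(t)\bigr|~\to~0,
\]
which is the asserted uniform convergence. I expect the main obstacle to be this last step: verifying rigorously that only the $C$-term carries the discrepancy and that the common $A$- and $B$-terms and the datum-independent $u$-domains play no role, so that non-expansiveness holds uniformly across all four subregions and one never has to track the locations of the individual shocks or rarefactions generated by the rapidly switching control.
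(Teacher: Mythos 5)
Your proposal is correct and follows essentially the same route as the paper: reduce the problem to uniform convergence of the downstream boundary data (which is where the no-spillback hypothesis enters, making $\mathcal{S}_3^1(t)\equiv\min\{C_1,C_3\}$), and then transfer that convergence to the Moskowitz functions via the non-expansiveness of the Lax-Hopf representation \eqref{explicitLH1}--\eqref{explicitLH4} in the downstream datum, with the discrepancy vanishing identically on $\Omega_I\cup\Omega_{II}$. Your explicit duty-cycle bound $\eta_1(1-\eta_1)\Delta_A\min\{C_1,C_3\}$ is a sharper, quantitative version of the paper's weak-convergence step and in fact anticipates the error estimate \eqref{altproofconv1} proved separately in Theorem \ref{estthm1}.
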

\begin{proof}
We readily notice that the control function $u_1(t)$ defined in \eqref{control1}  converges weakly to the constant function $\eta_1$ on the time interval $[0,\,T]$ as $\Delta_A\to 0$. According to the no-spillback hypothesis, the effective supply $\mathcal{S}_3^1(t) \doteq \min\{C_1,\,S_3(t)\}\equiv \min\{C_1,\,C_3\}$ where $C_3$ denotes the flow capacity of $I_3$. Therefore  by the definition of weak convergence, we have
\begin{equation}\label{weakconveqn2}
N_{down}^{\Delta_A}(t)~=~\int_0^t \mathcal{S}_3^1(\tau)u_1(\tau)\,d\tau~~\longrightarrow~~ \int_0^t\eta_1\mathcal{S}_3^1(\tau)\,d\tau~=~N_{down}^0(t)\qquad \hbox{as}~~\Delta_A~\to~0
\end{equation}
uniformly for all $t\in[0,\,T]$.

By \eqref{explicitLH1} and \eqref{explicitLH2}, we deduce that $N^{\Delta_A}(t,\,x)\equiv N^0(t,\,x)$ for all $(t,\,x)\in \Omega_{I}\cup\Omega_{II}$, since the solution in these regions is not affected by the downstream boundary condition.

We next turn our attention to region $\Omega_{III}\cup\Omega_{IV}$. Given any $\varepsilon>0$, by virtue of \eqref{weakconveqn2}, there exists a $\delta>0$ such that whenever $\Delta_A<\delta$, we have 
\begin{equation}\label{epsilon}
\left|N_{down}^{\Delta_A}(t)-N_{down}^0(t)\right|~<~\varepsilon\qquad \forall t\in[0,\,T]
\end{equation}
Fix arbitrary $(t,\,x)\in\Omega_{III}\cup\Omega_{IV}$, we denote
\begin{align}
\label{minone}
u^{*,\Delta_A}~\doteq~&\text{argmin}_{u\in[-w,\,{x-b\over t}]}\left\{N^{\Delta_A}_{down}\left(t-{x-b\over u}\right)+{x-b\over u}f^*(u)   \right\}
\\
\label{mintwo}
u^{*,0}~\doteq~&\text{argmin}_{u\in[-w,\,{x-b\over t}]}\left\{N^{0}_{down}\left(t-{x-b\over u}\right)+{x-b\over u}f^*(u)   \right\}
\end{align}
According to \eqref{epsilon} and \eqref{mintwo},
\begin{align*}
N^{\Delta_A}_{down}\left(t-{x-b\over u^{*,\Delta_A}}\right)+{x-b\over u^{*,\Delta_A}}f^*\big(u^{*,\Delta_A}\big)~\geq~& N^{0}_{down}\left(t-{x-b\over u^{*,\Delta_A}}\right)+{x-b\over u^{*,\Delta_A}}f^*\big(u^{*,\Delta_A}\big)-\varepsilon
\\
~\geq~& N^{0}_{down}\left(t-{x-b\over u^{*,0}}\right)+{x-b\over u^{*,0}}f^*\big(u^{*,0}\big)-\varepsilon
\end{align*}
We may similarly deduce from \eqref{epsilon} and \eqref{minone} that
$$
N^{0}_{down}\left(t-{x-b\over u^{*,0}}\right)+{x-b\over u^{*,0}}f^*\big(u^{*,0}\big)~\geq~N^{\Delta_A}_{down}\left(t-{x-b\over u^{*,\Delta_A}}\right)+{x-b\over u^{*,\Delta_A}}f^*\big(u^{*,\Delta_A}\big)-\varepsilon
$$
Thus
\begin{multline}
\Bigg|\min_{u\in[-w ,\, {x-b\over t}]}\left\{N^{\Delta_A}_{down}\left(t-{x-b\over u}\right)+{x-b\over u}f^*(u)   \right\} - 
\\
\min_{u\in[-w ,\, {x-b\over t}]}\left\{N^0_{down}\left(t-{x-b\over u}\right)+{x-b\over u}f^*(u)   \right\}\Bigg|~<~\varepsilon
\end{multline}
In view of \eqref{explicitLH3} and \eqref{explicitLH4}, the above estimate gives the difference in $\min_{u\in[-w, {x-b\over t}]}C(u; t, x)$ when $N^{\Delta_A}_{down}(\cdot)$ and $N^0_{down}(\cdot)$ are respectively used.  Since the rest of the quantities appearing in \eqref{explicitLH3}-\eqref{explicitLH4} do not depend on the downstream boundary condition,  we conclude that $|N^{\Delta_A}(t,\,x)-N^0(t,\,x)|<\varepsilon$ for all $(t,\,x)\in\Omega_{III}\cup\Omega_{IV}$. This implies the desired uniform convergence. 
\end{proof}

\begin{remark}
One important observation  from the proof of Theorem \ref{thmconvns}  is that when conditions $N_{ini}(x)$ and $N_{up}(t)$ are fixed, the difference of the two Moskowitz functions $\left|N^{\Delta_A}(t,\,x)-N^0(t,\,x)\right|$ is bounded by the maximum difference of their respective weak downstream boundary conditions, that is,
$$
\left|N^{\Delta_A}(t,\,x)-N^0(t,\,x)\right|~\leq~\max_{\tau\in[0,\,t]}\left|N_{down}^{\Delta_A}(\tau)-N_{down}^0(\tau)\right|\qquad\forall t\in[0,\,T],\quad x\in[a,\,b]
$$
 In other words, approximation of the Moskowitz function is only ``as good as" how $N_{down}^{\Delta_A}(t)$ is approximated by $N_{down}^0(t)$. Such insight is crucial for our error analysis presented later.
\end{remark}

The next corollary generalizes the convergence result stated for a single junction to a network.

\begin{corollary}
Consider a  network with a fixed-cycle-and-split signal control at each intersection, with merge rules given by \eqref{oaodef}. Assume the flow dynamic on each link is governed by a scalar conservation law \eqref{LWRPDE} with a continuous and concave fundamental diagram. In addition, assume that the entrance of every link remains in the uncongested phase; i.e., no spillback occurs in the network. Then the solution on this network converges to the one corresponding to the continuum signal model \eqref{continuumdef}, when the traffic signal cycles tend to zero.
\end{corollary}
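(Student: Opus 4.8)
The plan is to reduce the network statement to the single-link analysis already carried out for Theorem \ref{thmconvns}, and then to propagate the resulting convergence through the network by induction along the direction of traffic flow. The starting observation is that the no-spillback hypothesis removes all backward coupling between links: since the entrance of every link stays uncongested, each supply function equals the corresponding flow capacity, so every effective supply defined as in \eqref{effsupply} is a constant. Consequently the downstream weak condition of each link, as in \eqref{NDeltadef} and \eqref{N0def}, is a constant multiple of the cumulative green time at its downstream junction and does not depend on the downstream traffic state at all. As in the proof of Theorem \ref{thmconvns}, the weak convergence of each control $u\to\eta$ then forces the on-and-off downstream condition to converge uniformly to its continuum counterpart as all cycle lengths tend to zero. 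The only remaining coupling is the forward one, in which the output of a link feeds the input of its successors.

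The engine of the argument is a stability estimate for the Lax-Hopf representation. Each of the terms $A,B,C$ entering \eqref{explicitLH1}--\eqref{explicitLH4} depends additively, and through a single evaluation, on exactly one of the data $N_{ini},N_{up},N_{down}$, while the remaining part $\tau f^*(u)$ is independent of the data. Because an infimum of functions that are each perturbed by at most $\varepsilon$ in the supremum norm is itself perturbed by at most $\varepsilon$, we obtain, for two solutions built from data $(N_{ini},N_{up}^{(1)},N_{down}^{(1)})$ and $(N_{ini},N_{up}^{(2)},N_{down}^{(2)})$ sharing the same fixed initial datum,
\begin{equation*}
\left|N^{(1)}(t,x)-N^{(2)}(t,x)\right|~\le~\max\left\{\max_{\tau\in[0,t]}\left|N_{up}^{(1)}(\tau)-N_{up}^{(2)}(\tau)\right|,~\max_{\tau\in[0,t]}\left|N_{down}^{(1)}(\tau)-N_{down}^{(2)}(\tau)\right|\right\}.
\end{equation*}
This sharpens the remark following Theorem \ref{thmconvns} by allowing the upstream datum to vary as well, which is exactly what happens in a network: the on-and-off and continuum models feed different upstream data into each downstream link. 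Hence uniform convergence of a link's solution will follow once we know that both its upstream and its downstream weak data converge uniformly.

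I would then run the induction along a topological ordering of the links induced by the direction of traffic. For a source link the upstream datum is the exogenous inflow, identical in both models, so only the downstream datum varies; convergence is immediate from the estimate above together with the weak convergence of the downstream condition, reproducing Theorem \ref{thmconvns}. For an interior link $I$, its weak upstream condition is, by the convention of the remark preceding Theorem \ref{thmLHib} and the merge relations in \eqref{oaodef}--\eqref{continuumdef}, the cumulative flow arriving at its entrance; since that entrance is uncongested, this equals the sum of the cumulative exit counts $N(\cdot,b)$ of the predecessor links, i.e., the downstream traces of their Moskowitz functions. By the inductive hypothesis these traces converge uniformly, so the upstream datum of $I$ converges; combined with the convergence of its downstream datum, the stability estimate yields uniform convergence of $N$ on $I$. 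Iterating over the ordering gives convergence on the whole network.

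The main obstacle is the well-foundedness of this induction when the network contains a directed cycle, so that no global topological order of links exists. I would resolve this with a space-time induction: because all kinematic wave speeds lie in the bounded interval $[-w,v]$ and, by the no-spillback hypothesis, no backward wave crosses a junction, information propagates around any cycle at finite speed. Partitioning $[0,T]$ into subintervals short enough that flow cannot traverse a full cycle within one subinterval, one can order the resulting space-time strips causally and carry out the induction strip by strip. A secondary point requiring care is the verification that the demand-based upstream datum of the remark indeed coincides, under no spillback, with the realized combined inflow used above; this is where the assumption that the entrance is uncongested is invoked a second time, guaranteeing that nothing presented at the junction is held back.
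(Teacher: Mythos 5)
Your proposal is correct and follows the same overall strategy as the paper's proof: reduce to the single-junction result of Theorem \ref{thmconvns} and propagate convergence link by link through the network. The difference is one of rigor rather than of route. The paper's argument is a two-line sketch --- it invokes Theorem \ref{thmconvns} at each intersection and then asserts that ``the error of the continuum approximation on each link adds up linearly throughout the network'' --- whereas you supply the two ingredients that make this assertion precise. First, your stability estimate extending the remark after Theorem \ref{thmconvns} to perturbations of the \emph{upstream} datum as well as the downstream one is exactly the missing lemma: in a network the two models feed different upstream data into each interior link, and the paper's remark as stated only controls the effect of the downstream datum. Your observation that each of $A$, $B$, $C$ in \eqref{explicitLH1}--\eqref{explicitLH4} depends on exactly one datum through a single evaluation, so that an $\varepsilon$-perturbation of the data in sup norm perturbs the infimum by at most $\varepsilon$, is the correct justification and is what the paper implicitly relies on. Second, the paper is silent on the well-foundedness of the link-by-link induction when the network contains directed cycles; your space-time induction using the finite wave speeds and the fact that, absent spillback, no information crosses a junction backwards, closes that gap cleanly. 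Your secondary caveat --- that the demand-based upstream datum of a link coincides with the sum of the predecessors' realized exit counts precisely because the entrance is uncongested --- is also a genuine point that the paper does not spell out. In short: same skeleton, but your version is a complete proof where the paper offers an outline.
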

\begin{proof}
Without loss of generality we assume that the network is initially empty. Under the stated hypothesis, for each link, the supply function of this link is always a constant and equal to its flow capacity.  According to Theorem \ref{thmconvns}, convergence to a continuum model holds at each signalized intersection. Further notice that the error of the continuum approximation \eqref{continuumdef} on each link adds up linearly throughout the network. Thus the convergence also holds on a network level.
\end{proof}

\subsection{Approximation errors without spillback}

In the previous section, we have established convergence results for the continuum signal models in regimes where the signal cycles are infinitesimal. While providing theoretical foundations for a class of approximate signalized junction models, these convergence results are not satisfying from a practical point of view since any signal cycle must be bounded away from zero. To make our investigations more practical, we conduct further analysis on the approximation error of the continuum model when the cycles are not infinitesimal. Results presented below may assist practitioners with applying the continuum approximation of the on-and-off signal models and evaluating its efficacy.

\begin{theorem}\label{estthm1}{\bf (Error estimate without spillback)}
Consider the signalized merge junction depicted in Figure \ref{figsimplemerge}. Assume the Hamilton-Jacobi equation \eqref{HJE} for link $I_1$ has weak value conditions $N_{ini}(x)$ and $N_{up}(t)$. Furthermore, let $N^{\Delta_A}(t,\,x)$ and $N^0(t,\,x)$ be the solutions of this H-J equation with additional downstream boundary conditions $N_{down}^{\Delta_A}(t)$ and $N_{down}^0(t)$ respectively, where $N_{down}^{\Delta_A}(t)$ and $N_{down}^0(t)$ are given in \eqref{NDeltadef} and \eqref{N0def}. In addition, assume that the entrance of link $I_3$ remains in the uncongested phase. Then for all $(t,\,x)\in[0,\,T]\times[a_1,\,b_1]$, 
\begin{equation}\label{altproofconv1}
\left|N^{\Delta_A}(t,\,x)-N^0(t,\,x)\right|~\leq~\eta_1(1-\eta_1)\Delta_A \min\{C_1, C_3\}~\leq~{1\over 4} \Delta_A \min\{C_1,C_3\} 
\end{equation}
\end{theorem}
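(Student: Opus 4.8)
The plan is to reduce the whole estimate to a one-dimensional computation on the downstream boundary data, exploiting the bound recorded in the Remark following Theorem~\ref{thmconvns}. That Remark asserts
$$
\left|N^{\Delta_A}(t,\,x)-N^0(t,\,x)\right|~\leq~\max_{\tau\in[0,\,t]}\left|N_{down}^{\Delta_A}(\tau)-N_{down}^0(\tau)\right|,
$$
so it suffices to control the right-hand side uniformly in $t\in[0,\,T]$. Since the weak conditions $N_{ini}$ and $N_{up}$ are shared by the two solutions, no information about the interior link dynamics is needed.

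First I would invoke the no-spillback hypothesis exactly as in the proof of Theorem~\ref{thmconvns}: because the entrance of $I_3$ stays uncongested, $S_3(t)\equiv C_3$, and hence the effective supply $\mathcal{S}_3^1(t)=\min\{C_1,\,S_3(t)\}\equiv \min\{C_1,\,C_3\}$ is a \emph{constant}, which I denote $M$. Factoring $M$ out of the integrals in \eqref{NDeltadef} and \eqref{N0def} gives
$$
N_{down}^{\Delta_A}(\tau)-N_{down}^0(\tau)~=~M\left(\int_0^\tau u_1(s)\,ds-\eta_1\tau\right),
$$
so the problem collapses to bounding the scalar deviation $G(\tau)\doteq\int_0^\tau u_1(s)\,ds-\eta_1\tau$ between the accumulated green time and its continuum average.

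The core step is to maximize $|G(\tau)|$ over $\tau$. Writing $\tau=n\Delta_A+s$ with $n\in\mathbb{N}$ and $s\in[0,\,\Delta_A)$, I would observe that each complete cycle contributes green time $\eta_1\Delta_A$ against an average increment of exactly $\eta_1\Delta_A$, so $G(n\Delta_A)=0$ and only the partial cycle matters. On the green sub-interval $s\in[0,\,\eta_1\Delta_A]$ one computes $G=(1-\eta_1)s$, rising to its peak $\eta_1(1-\eta_1)\Delta_A$ at the green-to-red switch $s=\eta_1\Delta_A$; on the red sub-interval $s\in[\eta_1\Delta_A,\,\Delta_A]$ one gets $G=\eta_1(\Delta_A-s)$, decreasing back to $0$. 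Thus $G$ is a nonnegative sawtooth with
$$
\max_{\tau\in[0,\,T]}|G(\tau)|~=~\eta_1(1-\eta_1)\Delta_A,
$$
attained precisely at each green-to-red transition. Multiplying by $M=\min\{C_1,\,C_3\}$ and chaining through the Remark yields the first inequality in \eqref{altproofconv1}; the second is immediate from maximizing the concave quadratic $\eta_1(1-\eta_1)$ over $(0,\,1)$, with maximum value $1/4$ at $\eta_1=1/2$.

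The one point demanding care is the phase bookkeeping in the sawtooth analysis: the identity $G(n\Delta_A)=0$ presupposes that each cycle is counted from its green phase (or, more generally, that the offset is handled consistently), and that the peak deviation occurs at the switch rather than in a phase interior. Once this piecewise-linear structure is pinned down the bound is sharp and, crucially, \emph{independent of both $t$ and $x$}, which is exactly the uniform-in-time conclusion claimed in contribution 1(b).
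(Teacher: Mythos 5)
Your proposal is correct and follows essentially the same route as the paper's proof: reduce to the downstream boundary data via the bound $|N^{\Delta_A}-N^0|\leq\max_\tau|N_{down}^{\Delta_A}(\tau)-N_{down}^0(\tau)|$, use the no-spillback hypothesis to make $\mathcal{S}_3^1$ the constant $\min\{C_1,C_3\}$, and bound the deviation of the accumulated green time from $\eta_1 t$ by $\eta_1(1-\eta_1)\Delta_A$. The only difference is that you carry out the sawtooth computation explicitly where the paper simply asserts the inequality $\left|\int_0^t u_1(\tau)\,d\tau-\eta_1 t\right|\leq\eta_1(1-\eta_1)\Delta_A$, which is a welcome addition rather than a deviation.
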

\begin{proof}
According to the hypothesis, we have that $\mathcal{S}_3^1(t)\equiv \min\{C_1,\,C_3\}$. Then for any $t\geq 0$,
\begin{equation}\label{nsestimate}
\left|N_{down}^{\Delta_A}(t) -N_{down}^0(t)\right|~=~\min\{C_1, C_3\}\left|\int_0^tu_1(\tau)\,d\tau- \eta_1 t \right|~\leq~\eta_1(1-\eta_1)\Delta_A \min\{C_1,C_3\}
\end{equation}
 Therefore, using the same argument as in the proof of Theorem \ref{thmconvns}, we conclude that 
 $$
 \left|N^{\Delta_A}(t,\,x) -N^0(t,\,x)\right|~\leq~\max_{\tau\in[0,\,t]}\left|N_{down}^{\Delta_A}(\tau) -N_{down}^0(\tau)\right| ~\leq~\eta_1(1-\eta_1)\Delta_A \min\{C_1,\,C_3\}
 $$ 
 for all $(t,\,x)\in[0,\,T]\times[a_1,\,b_1]$.  Finally, it is useful to notice that $\eta_1(1-\eta_1)\leq {1\over 4}$,  and the equality holds if and only if $\eta_1={1\over 2}$. 
\end{proof}

In parallel to Section \ref{secconvns}, we state the error estimates for a whole network in the corollary below.

\begin{corollary}\label{cor1}
Consider a  network with a fixed-cycle signal control at each intersection, with merge rules given by \eqref{oaodef}. Assume the flow dynamic on each link is governed by a scalar conservation law \eqref{LWRPDE} with a continuous and concave fundamental diagram. In addition, assume that the entrance of every link remains in the uncongested phase. Then for every link $I_i$ of the network, let $N^{\Delta, i}(t,\,x)$ and $N^{0, i}(t,\,x)$ be  the two Moskowitz functions obtained from the on-and-off and the continuum modeling approaches respectively. Then $|N^{\Delta,i}(t,\,x)-N^{0,i}(t,\,x)|$ is less than or equal to the sum of such differences of preceding links including $I_i$. 
\end{corollary}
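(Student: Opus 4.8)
The plan is to recast the network claim as an iteration of the single-link stability estimate recorded in the Remark following Theorem~\ref{thmconvns}, after first strengthening that estimate so that it controls the Moskowitz error in terms of the discrepancies in \emph{all} of the weak conditions, and then to transport those discrepancies across the network using conservation of cumulative flow at each junction. Throughout, I write $E_i(t)\doteq\sup_{\tau\in[0,\,t]}\max_{x\in[a_i,\,b_i]}\big|N^{\Delta,i}(\tau,\,x)-N^{0,i}(\tau,\,x)\big|$ for the error accumulated on link $I_i$ up to time $t$, and $\delta_i(t)\doteq\max_{\tau\in[0,\,t]}\big|N_{down}^{\Delta,i}(\tau)-N_{down}^{0,i}(\tau)\big|$ for the local discrepancy between the on-and-off and continuum downstream data produced by the signal at the node immediately downstream of $I_i$ (with $\delta_i\equiv 0$ when $I_i$ has no downstream signal).

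\textbf{Step 1 (three-sided stability estimate).} First I would upgrade the Remark from a one-sided to a three-sided bound. Feeding two triples of weak conditions into the explicit Lax--Hopf representation \eqref{explicitLH1}--\eqref{explicitLH4} and using the elementary inequality $\big|\min_u g(u)-\min_u h(u)\big|\le\sup_u|g(u)-h(u)|$, the argument already carried out in the proof of Theorem~\ref{thmconvns} --- now applied simultaneously to the three families of terms $A,\,B,\,C$ --- gives, for any single link,
\begin{equation*}
\big|N^{\Delta}(t,\,x)-N^{0}(t,\,x)\big|~\le~\max\Big\{\sup_{x}\big|N_{ini}^{\Delta}-N_{ini}^{0}\big|,\ \sup_{\tau\le t}\big|N_{up}^{\Delta}(\tau)-N_{up}^{0}(\tau)\big|,\ \sup_{\tau\le t}\big|N_{down}^{\Delta}(\tau)-N_{down}^{0}(\tau)\big|\Big\}.
\end{equation*}
Both models share the same initial data, so the first term vanishes; the third term is exactly $\delta_i(t)$, which Theorem~\ref{estthm1} bounds by the local quantity $\eta_i(1-\eta_i)\Delta_i\min\{C_i,\,C_{i^{+}}\}$, where $I_{i^{+}}$ is the link downstream of $I_i$. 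Taking the supremum over $\tau\le t$ and the maximum over $x$ therefore yields $E_i(t)\le\max\big\{\sup_{\tau\le t}|N_{up}^{\Delta,i}-N_{up}^{0,i}|,\,\delta_i(t)\big\}\le\sup_{\tau\le t}\big|N_{up}^{\Delta,i}(\tau)-N_{up}^{0,i}(\tau)\big|+\delta_i(t)$.

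\textbf{Step 2 (transporting the error upstream).} The crux --- and the step I expect to be the main obstacle --- is to bound the upstream discrepancy of $I_i$ by errors already present on the links feeding it. Here I would use conservation of cumulative flow at the node upstream of $I_i$: the cumulative count entering $I_i$ equals the sum, over the immediately preceding links $I_j$, of the cumulative counts leaving $I_j$, so that (after fixing the common additive constant by the initial data) $N^{i}(t,\,a_i)=\sum_{j\to i}N^{j}(t,\,b_j)$ in each of the two models. The delicate point is that the weak condition $N_{up}^{i}$ of the Remark is phrased through the \emph{demand} of the preceding links rather than through these traces; what makes the two agree, so that the downstream Moskowitz values of the preceding links pass into $N_{up}^{i}$ without loss, is precisely the no-spillback hypothesis, which keeps the entrance of $I_i$ uncongested so that its supply never throttles the inflow. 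Granting this, $\sup_{\tau\le t}|N_{up}^{\Delta,i}-N_{up}^{0,i}|\le\sum_{j\to i}\sup_{\tau\le t}|N^{\Delta,j}(\tau,\,b_j)-N^{0,j}(\tau,\,b_j)|\le\sum_{j\to i}E_j(t)$.

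\textbf{Step 3 (induction).} Combining Steps~1 and~2 yields the one-step recursion $E_i(t)\le\sum_{j\to i}E_j(t)+\delta_i(t)$. I would then close the argument by induction along a topological order of the network: each source link has no predecessor and satisfies $E_i\le\delta_i$ directly from Theorem~\ref{estthm1}, and unwinding the recursion shows that $E_i(t)$ is bounded by the sum of the local discrepancies $\delta_j(t)$ taken over all links $I_j$ that precede $I_i$ together with $\delta_i(t)$ itself. This is exactly the asserted linear accumulation of the per-link approximation errors, and it collapses to Theorem~\ref{estthm1} when $I_i$ has no predecessors.
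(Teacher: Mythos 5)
Your proposal is correct and follows essentially the same route as the paper: the single-link bound of Theorem \ref{estthm1} controls the downstream discrepancy, and the upstream discrepancy is inherited additively from the preceding links, closed by induction over the network. The paper's own proof states the linear accumulation in two sentences without writing out your Steps 1--3; your version simply makes explicit the three-sided Lax--Hopf stability estimate and the flow-conservation bookkeeping that the paper leaves implicit.
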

\begin{proof}
Without loss of generality, we again assume that the network is initially empty. If the upstream condition $N_{up}^i(t)$ of $I_i$ is fixed, the error $|N^{\Delta,i}(t,\,x)-N^{0,i}(t,\,x)|$ is estimated by \eqref{altproofconv1}. However, $N_{up}^i(t)$ may be subject to errors due to the continuum signal approximations on the preceding links. Clearly such error adds up linearly throughout the network, therefore the actual  $|N^{\Delta,i}(t,\,x)-N^{0,i}(t,\,x)|$ is bounded by a summation of errors from previous links.
\end{proof}

\begin{remark}
The error estimation established in Corollary \ref{cor1} is quite conservative in that it assumes the worst case scenario where errors add up without cancellation. In order to obtain a more accurate estimation, one needs to look at specific network topology and the actual signal split and cycles used, including the initial phase of each signal. The key message from Corollary \ref{cor1} is that the continuum  approximation picks up errors from one link to another additively at the most.
\end{remark}

\section{Convergence results and error estimation in the presence of spillback}\label{secwithspillback}

In the previous section, asymptotic behavior of the on-and-off signal model as well as the approximation error of its continuum counterpart are investigated under the assumption that the entrance of $I_3$ remains in the uncongested phase. On the other hand, if $I_3$ is dominated by the congested phase,  the signal control $u_3(t)$ located at node $B$ may directly affect supply $S_3(t)$ and hence $\mathcal{S}_3^1(t)$. As we shall explain below, when this happens the previously stated convergence result and error estimation no longer hold in certain cases, depending on the type of fundamental diagram employed. 

For the remainder of this section, we will examine the behavior of the continuum approximation model assuming that spillback persists for a significant period of time (i.e., for at least several signal cycles). The case where spillback persists for a much shorter period of time will be discussed in Section \ref{secTransient}.

\subsection{Convergence result with spillback}

\subsubsection{The case with triangular fundamental diagram}

We will first demonstrate analytically that the convergence does not hold with vehicle spillback if a triangular fundamental diagram is assumed. A triangular fundamental diagram has the following form:
\begin{equation}\label{triangularfd}
f(\rho)~=~\begin{cases}
v\rho \qquad\qquad & \rho\in[0,\,\rho_c]
\\
-w(\rho-\rho_{j})\qquad\qquad & \rho\in(\rho_c,\,\rho_{j}]
\end{cases}
\end{equation}
where $v$ and $w$ are the speeds of the forward- and backward-propagating kinematic waves respectively. In the case of triangular fundamental diagram, $v$ also coincides with the free-flow speed. $\rho_c$ denotes the critical density, and $\rho_{j}$ denotes the jam density.

Consider the merge junction in Figure \ref{figsimplemerge}. Let us focus on $I_3$ which remains in the congested phase. In the spatial-temporal domain of $I_3$, characteristic lines with slope $-w_3$ emit from the right boundary $x=b_3$ and reach the left boundary $x=a_3$, where $w_3$ denotes the backward wave speed on link $I_3$ (see Figure \ref{figScenario4}). When the light is red, the exit flow $q_3$ is equal to zero, creating a kinematic wave with speed $-w_3$ and density value $\rho^3_j$ where $\rho^3_j$ denotes the jam density of $I_3$; when the light is  green, the exit flow $q_3$ is equal to the flow capacity $C_3$, creating a kinematic wave with speed $-w_3$ and density value $\rho^3_c$, where $\rho^3_c$ denotes the critical density on $I_3$. As a result,  the supply function $S_3(t)$ at the entrance of $I_3$ fluctuates between $0$ and $C_3$, leading $\mathcal{S}_3^1(t)$ to fluctuate between $0$ and $\min\{C_1,\,C_3\}$.

\begin{figure}[h!]
\centering
\includegraphics[width=.6\textwidth]{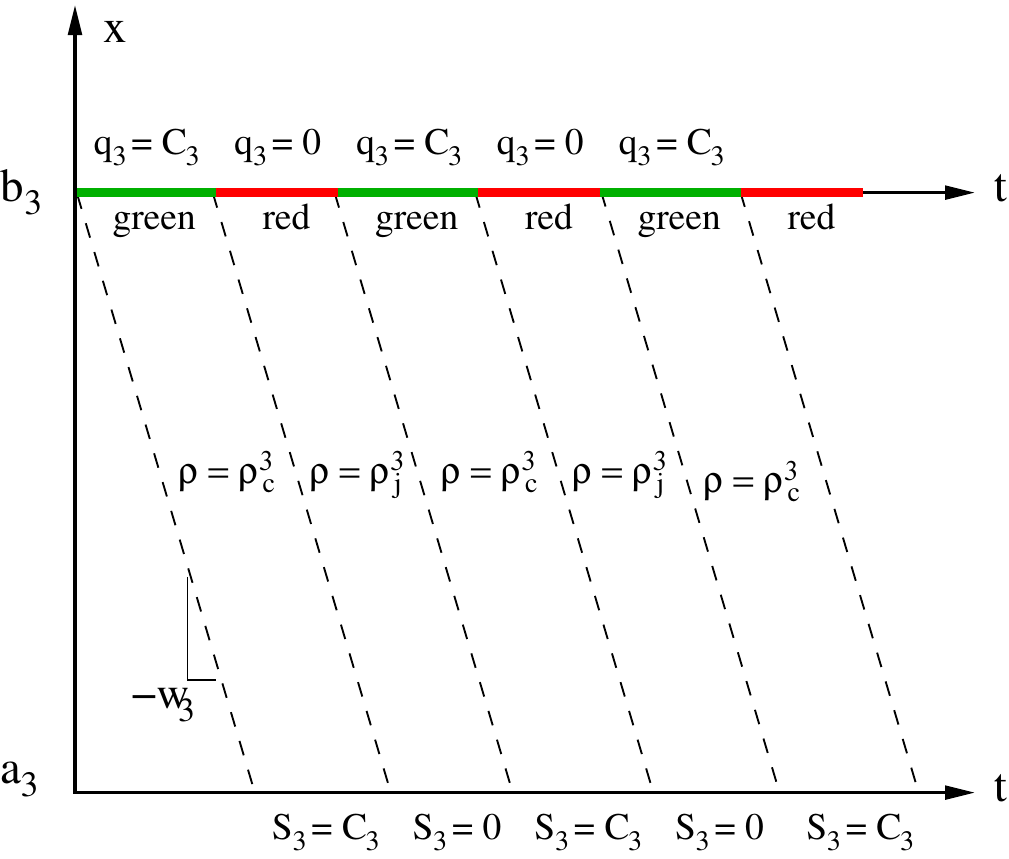}
\caption{The scenario where link $I_3$ is in the congested phase, when a triangular fundamental diagram is employed. The dashed lines represent characteristics traveling backward at speed $w_3$. $C_3$ denotes the flow capacity; $q_3$ denotes the link exit flow; $\rho_c^3$ and $\rho^3_j$ denote the critical density and the jam density respectively; $S_3$ denotes the supply at the entrance of the link.}
\label{figScenario4}
\end{figure}

The key observation is that the effective supply $\mathcal{S}_3^1(t)$ does not have bounded variation as the signal cycle of $u_3(t)$ tends to zero.  As a consequence, the convergence expressed in \eqref{weakconveqn2} no longer holds. To see this, we simply adjust $u_1$ and $u_3$ such that  $S_3(t)= C_3\cdot u_1(t)$ (in this case, we say that signal controls $u_1$ and $u_3$ are resonant) and perform the following calculation:
$$
\int_0^t\mathcal{S}_3^1(\tau)u_1(\tau)\,d\tau~=~\int_0^t \min\{C_1,\,C_3\}\cdot u_1^2(\tau)\,d\tau~=~\int_0^t \min\{C_1,\,C_3\}\cdot u_1(\tau)\,d\tau~=~\int_0^t \mathcal{S}_3^1(\tau)\,d\tau
$$
which never converges to $\displaystyle \eta_1\int_0^t \mathcal{S}_3^1(\tau)\,d\tau$ regardless of the cycle length $\Delta_A$.  Even if $u_1$ and $u_3$ are not resonant, due to the fact that $\mathcal{S}_3^1(t)$ does not have bounded variation as the cycle of $u_3(t)$ becomes smaller and smaller, the convergence  will not hold in general (in fact, $\mathcal{S}_3^1(t)$ will oscillate more and more violently as the cycle of $u_3(t)$ diminishes). We thus conclude that in the case of a triangular fundamental diagram, the proposed continuum junction model does not yield a sound approximation of networks controlled by more than one on-and-off signal lights, unless the spillback case depicted in Figure \ref{figScenario4} does not occur.

\subsubsection{The case with strictly concave fundamental diagram}

This section establishes convergence result for the on-and-off signal model with strictly concave fundamental diagram. A strictly concave fundamental diagram $f(\cdot)$ is a piecewise smooth function that satisfies, in addition to  {\bf (F)}, 
\begin{equation}\label{scdef}
f''(\rho)\leq -b \qquad \hbox{for some }~ b~>~0
\end{equation}
 for all $\rho\in[0,\,\rho_j]$ such that $f(\cdot)$ is twice differentiable at $\rho$.

Let us re-visit the scenario where link $I_3$ is dominated by the congested phase, but now assume a strictly concave fundamental diagram. We begin with the observation that in this case the characteristic field is genuinely nonlinear. As a result, any flux variation generated by signal control at the exit of the link gets instantaneously reduced when the waves propagate backwards, see \cite{Bressan 2000} for more mathematical details. Therefore, it is expected that the convergence may still hold  even in the presence of spillback. The following lemma is the key ingredient of our convergence result and its proof is quite informative.

\begin{lemma}\label{strictconcavespthm}
Given the merge junction in Figure \ref{figsimplemerge}, we focus on the link $I_3$ expressed as a spatial interval $[0,\,L]$ with a strictly concave fundamental diagram $f(\cdot)$. Assume that a signal control $u_3(t)$ with a fixed cycle-and-split is present at the exit of $I_3$ (node $B$) and that the whole link $I_3$ remains in the congested phase. Then the supply function $S_3(t)$ converges to some constant $S_3^*$ uniformly  as the cycle length of $u_3(t)$ tends to zero. 
\end{lemma}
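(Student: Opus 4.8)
The plan is to read off the entrance supply $S_3(t)=f_3\big(\rho_3(t,0+)\big)$ from the generalized Lax--Hopf formula and to show that genuine nonlinearity traps the minimizing backward characteristic near a single speed as the cycle shrinks. Write $I_3=[0,L]$ and let $N(t,x)$ be the Moskowitz function on $I_3$ carrying the downstream (node $B$) weak datum $N_{down}^{\Delta_B}(t)\doteq\int_0^t q_3(\tau)\,d\tau$, where by the congested-exit demand the flow $q_3$ alternates between $0$ (red) and $C_3$ (green) according to $u_3$. Since the whole link stays congested, its entrance is influenced only by the downstream datum, so the binding term in \eqref{explicitLH3}--\eqref{explicitLH4} at $x=0$, $b=L$ is the $C(\cdot)$-term; after the substitution $s=-L/u$ (mapping $u\in[-w,-L/t]$ to $s\in[L/w,t]$) this reads
\[
N(t,0)=\min_{s\in[L/w,\,t]}\Big\{N_{down}^{\Delta_B}(t-s)+g(s)\Big\},\qquad g(s)\doteq s\,f^*\!\Big(-\tfrac{L}{s}\Big).
\]
Using $f^*(u)=f\big(\rho(u)\big)-u\rho(u)$ and $(f^*)'(u)=-\rho(u)$ with $\rho(u)\doteq(f')^{-1}(u)$, I would record the two identities I rely on, $g'(s)=f\big(\rho(-L/s)\big)$ and $g''(s)=L^2(f^*)''(-L/s)/s^3$. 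The entrance supply is the flow $S_3(t)=\partial_t N(t,0)=g'\big(s^*(t)\big)$, i.e.\ the entrance density is read from the optimal wave speed $u^*(t)=-L/s^*(t)$.

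Next I would establish uniform strict convexity. The strict concavity assumption \eqref{scdef} makes $f^*$ strictly convex with $0<(f^*)''\le 1/b$, so $g''>0$; equivalently $\phi(s)\doteq g(s)-\bar q\,s$ is strictly convex, where $\bar q$ is the mean exit flow. Its unconstrained minimizer $\bar s$ solves $g'(\bar s)=\bar q$, and since $\bar q\in(0,C_3)$ the corresponding congested density lies strictly between $\rho_c$ and $\rho_j$, so $\bar u=-L/\bar s\in(-w,0)$ is interior and $\bar s$ is bounded away from $L/w$ and $+\infty$. Hence on a fixed neighborhood of $\bar s$ one has $\phi''\ge\mu_0>0$.

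Then comes the oscillation-plus-trapping step. Decompose $N_{down}^{\Delta_B}(\tau)=\bar q\,\tau+R(\tau)$; exactly as in the proof of Theorem \ref{estthm1}, the cumulative deviation from the mean obeys $\|R\|_\infty\le\tfrac14\Delta_B C_3$. Thus $N(t,0)=\bar q\,t+\min_s\big\{\phi(s)+R(t-s)\big\}$, and comparing the value at any minimizer $s^*(t)$ with the value at $\bar s$ yields $\tfrac{\mu_0}{2}\big(s^*(t)-\bar s\big)^2\le 2\|R\|_\infty$, so that $|s^*(t)-\bar s|\le O(\sqrt{\Delta_B})$ uniformly in $t$ once $\Delta_B$ is small enough to confine the minimizer to the neighborhood (a fixed value gap outside it forces confinement). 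Since $g'$ is Lipschitz, $S_3(t)=g'\big(s^*(t)\big)\to g'(\bar s)=\bar q=:S_3^*$ uniformly, which is the claim.

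The hard part is the reading-off $S_3(t)=g'\big(s^*(t)\big)$ together with the confinement of the minimizer. Because $N_{down}^{\Delta_B}$ has corners at every red--green switch, the map $s\mapsto N_{down}^{\Delta_B}(t-s)+g(s)$ is only piecewise smooth and its global minimizer typically sits at such a corner (a shock reaching the entrance), so one must argue through the subdifferential rather than a naive stationarity condition: indeed $\phi'(s^*)=R'(t-s^*)$ would misleadingly suggest an $O(1)$ displacement, whereas the quadratic trapping gives the correct $O(\sqrt{\Delta_B})$ and $g'\big(s^*(t)\big)$ is the robust (continuous) expression for the trace flow. I also need $\mu_0$ to be a genuine uniform lower bound, which is exactly why $\bar u$ must stay away from the degenerate speeds $0$ and $-w$ and why \eqref{scdef} is indispensable; this is precisely the mechanism that fails for a triangular diagram, where $f^*$ is affine, $g''\equiv0$, no trapping occurs, and the oscillation survives. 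Underlying all of this is the classical decay of oscillations for genuinely nonlinear scalar conservation laws (cf.\ \cite{Bressan 2000}), which the convexity of $g$ quantifies here.
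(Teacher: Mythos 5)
Your proof is correct, but it takes a genuinely different route from the paper's. The paper rewrites the congested dynamics as a conservation law in the flow variable $q$ with $x$ playing the role of time, applies the classical Oleinik one-sided gradient estimate to get $\partial_t q(t,0)\leq w^3/(bL)$ between shocks, and concludes that the downward jumps in $S_3(t)$ (which recur once per cycle) have magnitude $O(\Delta_B)$. You instead stay entirely inside the Lax--Hopf framework at the entrance: after the substitution $s=-L/u$ you correctly reduce the $C$-term of \eqref{explicitLH3}--\eqref{explicitLH4} to $\min_s\{N^{\Delta_B}_{down}(t-s)+g(s)\}$ with $g(s)=sf^*(-L/s)$, verify $g'(s)=f\bigl(\rho(-L/s)\bigr)$ and $g''(s)=L^2(f^*)''(-L/s)/s^3>0$, split the downstream datum into its linear mean plus a remainder bounded by $\tfrac14\Delta_B C_3$ (exactly the estimate \eqref{nsestimate}), and use uniform convexity of $\phi=g-\bar q\,s$ near its interior minimizer $\bar s$ to trap the optimal characteristic: $|s^*(t)-\bar s|=O(\sqrt{\Delta_B})$, hence $S_3(t)=g'(s^*(t))\to g'(\bar s)=\bar q$ uniformly (Danskin's theorem in the variable $\sigma=t-s$ justifies $\partial_t N(t,0)=g'(s^*(t))$ despite the corners of $N_{down}$, as you note). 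Your approach buys two things the paper's does not: an explicit identification of the limit $S_3^*=\bar q=\eta_3C_3$ as the time-averaged exit flow, and a transparent explanation of why the triangular case fails ($f^*$ affine, $g''\equiv 0$, no trapping). The paper's Oleinik route buys the sharper $O(\Delta_B)$ oscillation bound, which it reuses quantitatively in the error estimate \eqref{estthm2eqnGreen}, whereas your quadratic trapping only yields $O(\sqrt{\Delta_B})$; both rates suffice for the qualitative convergence asserted in the lemma. The only soft spots -- taking the $C$-term as binding at the congested entrance, and ignoring the initial transient $t<\bar s$ before the downstream influence is fully established -- are shared with the paper's own argument.
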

\begin{proof}
The proof is divided into several parts.

\noindent{\bf Part 1.}  We begin by noticing that when $I_3$ is in the congested phase, in the presence of alternating phases of red and green at the downstream boundary of $I_3$, the density profile on this congested link consists of shock and rarefaction waves. As Part I of Figure \ref{figspillbackpattern} shows, during the green time, a rarefaction wave is formed which then interacts with the characteristic lines generated by the red phase that comes afterwards. As a result of such interaction, a shock is formed which propagates backward until it reaches the entrance of $I_3$ ($x=0$). It is quite obvious from Part I of Figure \ref{figspillbackpattern} that the flow variable at the entrance of $I_3$, and hence the supply $S_3(t)$, will display a repeated pattern with downward jumps, which is illustrated in Part II of Figure \ref{figspillbackpattern}. Therefore, to show the desired result it suffices to estimate the magnitude of such jumps using an Oleinik-type estimate, as follows. \\

\begin{figure}[h!]
\centering
\includegraphics[width=\textwidth]{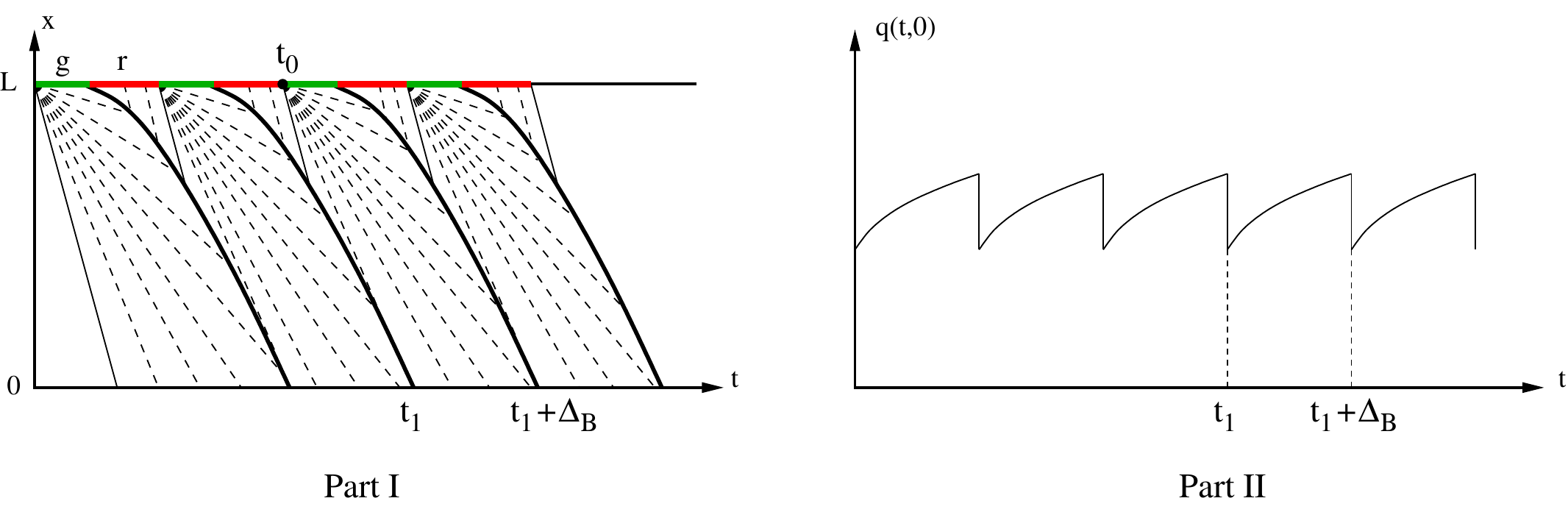}
\caption{Link $I_3$ dominated by the congested phase. Left: the repeated pattern in the density profile is caused by the on-and-off signal at the downstream boundary. Right: the time-dependent flow variable (and also the supply $S_3(t)$) observed at the upstream boundary of $I_3$.}
\label{figspillbackpattern}
\end{figure}

\noindent{\bf Part 2.} We readily notice that due to the prevailing congested phase, one can represent the dynamics with a scalar conservation law in the flow variable $q(t,\,x)$ instead of the density variable $\rho(t,\,x)$. In particular, we define 
\begin{equation}\label{gdef}
g(q)~\doteq~\max\Big\{\rho\in[0,\,\rho_j]:~f(\rho)~=~q\Big\}~=~\Big\{\rho\in[\rho_c,\,\rho_j]: ~f(\rho)~=~q  \Big\}\qquad q\in[0,\,C]
\end{equation}
which is the inverse of the fundamental diagram corresponding to the congested phase, where $\rho_c$ denotes the unique critical density and $C$ denotes the flow capacity. We introduce the conservation law with a downstream boundary condition
\begin{equation}\label{clawu}
\begin{cases}
&\partial_xq(t,\,x)+ \partial_t g\big(q(t,\,x)\big)~=~0\qquad (t,\,x)\in[0,\,T]\times[0,\,L]
\\
&q(t,\,L)~=~q_{exit}(t)  
\end{cases}
\end{equation}
where $q(t,\,x)$ denotes the flow at a point in the temporal-spatial domain, $q_{exit}(t)$ denotes the link exit flow which is determined by the signal control $u_3(t)$. The conservation law in \eqref{clawu} is equivalent to \eqref{LWRPDE} under the assumption that the link is dominated by the congested phase. A similar technique has been applied in \cite{BH}, and the reader is referred to \cite{LWRDUE} for a proof of such equivalence. We further notice that by switching the roles of $x$ and $t$, the downstream boundary condition $q_{exit}(t)$ can be viewed as a ``terminal condition" for \eqref{clawu}. Since the Oleinik estimate holds only in a time-forward fashion \citep{Bressan 2000}, we introduce the dummy variable $y=-x$ and write
\begin{equation}\label{clawu'}
\partial_yq(t,\,-y)- \partial_tg\big(q(t,\,-y)\big)~=~0\qquad (t,\,y)\in[0,\,T]\times[-L,\,0]
\end{equation}
with what is now the ``initial condition"
\begin{equation}\label{clawinicond}
q(t,\,-L)~=~q_{exit}(t)
\end{equation}
For such an initial value problem \eqref{clawu'}-\eqref{clawinicond}, the standard Oleinik estimate holds, that is, 
\begin{equation}\label{Oleinikest}
\partial_t q(t,\,-y)~\leq~{-1\over c(L+y)},\qquad\hbox{or}\qquad \partial_t q(t,\,x)~\leq~{-1\over c(L-x)}
\end{equation}
whenever $(t,\,-y)$ or $(t,\,x)$ is away from shock waves, where $c<0$ is any upper bound on $g''(\cdot)$.  \footnote{Notice that the estimate \eqref{Oleinikest} holds true for any downstream condition $q_{exit}(t)$.} The value of $c$ can be determined as follows. In view of \eqref{gdef}, we have for any $\rho\in[\rho_c,\,\rho_j]$ that
\begin{align*}
\rho~\equiv~g\big(f(\rho)\big)~\Longrightarrow~&  0~=~\big[g\big(f(\rho)\big)\big]''~=~g''\big(f(\rho)\big)\Big[f'(\rho)\Big]^2+g'\big(f(\rho)\big)f''(\rho)
\\
~\Longrightarrow~& g''\big(f(\rho)\big)~=~-{f''(\rho)\over \big[f'(\rho)\big]^3}~\leq~{b\over\big[f'(\rho_j)\big]^3}~\doteq~c
\end{align*}
where $b$ is given by \eqref{scdef}. Notice that we used the identity $g'\big(f(\rho)\big)\cdot f'(\rho)=1$ in the above deduction. Setting $x=0$, the Oleinik estimate \eqref{Oleinikest} yields the following critical result on the gradient of the flux at the entrance of $I_3$:
\begin{equation}\label{criticalresult}
\partial_t q(t,\,0)~\leq~-{\big[f'(\rho_j)\big]^3\over bL}
\end{equation}

\noindent {\bf Part 3.} We are now in a position ready to estimate the magnitude of the downward jumps depicted in Part II of Figure \ref{figspillbackpattern}. To do so, we readily notice that the duration between two consecutive jumps is comparable to a cycle length $\Delta_B$. Thus the magnitude of the jump is bounded by 
\begin{equation}\label{boundonjump}
 \Delta_B\cdot {-\big[f'(\rho_j)\big]^3\over bL}
\end{equation}
which tends to zero as the cycle length goes to zero. We thus conclude that the flow $q(t,\,0)$, and hence the supply $S_3(t)$ converges to a constant uniformly as $\Delta_B\to 0$.
\end{proof}

\begin{remark}\label{nonlinearremark}
In contrast to the triangular case, the convergence result holds for the strictly concave case even in the presence of vehicle spillback. An intuitive explanation, as we mention before, is related to the nonlinear effect caused by the strictly concave fundamental diagram. Figure \ref{figvariations1} compares the supply profiles observed at the entrance of  link $I_3$ when the whole link is in the  congested phase. As $\Delta_B\to 0$, in the triangular case the oscillation in $S_3(t)$ has the biggest amplitude and becomes more and more frequent, causing the total variation to blow up and the convergence \eqref{weakconveqn2} to fail. On the other hand, in the strictly concave case the oscillation in $S_3(t)$ is damped as it gets more and more frequent. In fact, one may easily show by \eqref{boundonjump} that the supply $S_3(t)$ has uniform bounded variation regardless of the cycle length $\Delta_B$. Thus  the convergence \eqref{weakconveqn2} continues to hold in this case.
\end{remark}

\begin{figure}[h!]
\centering
\includegraphics[width=.8\textwidth]{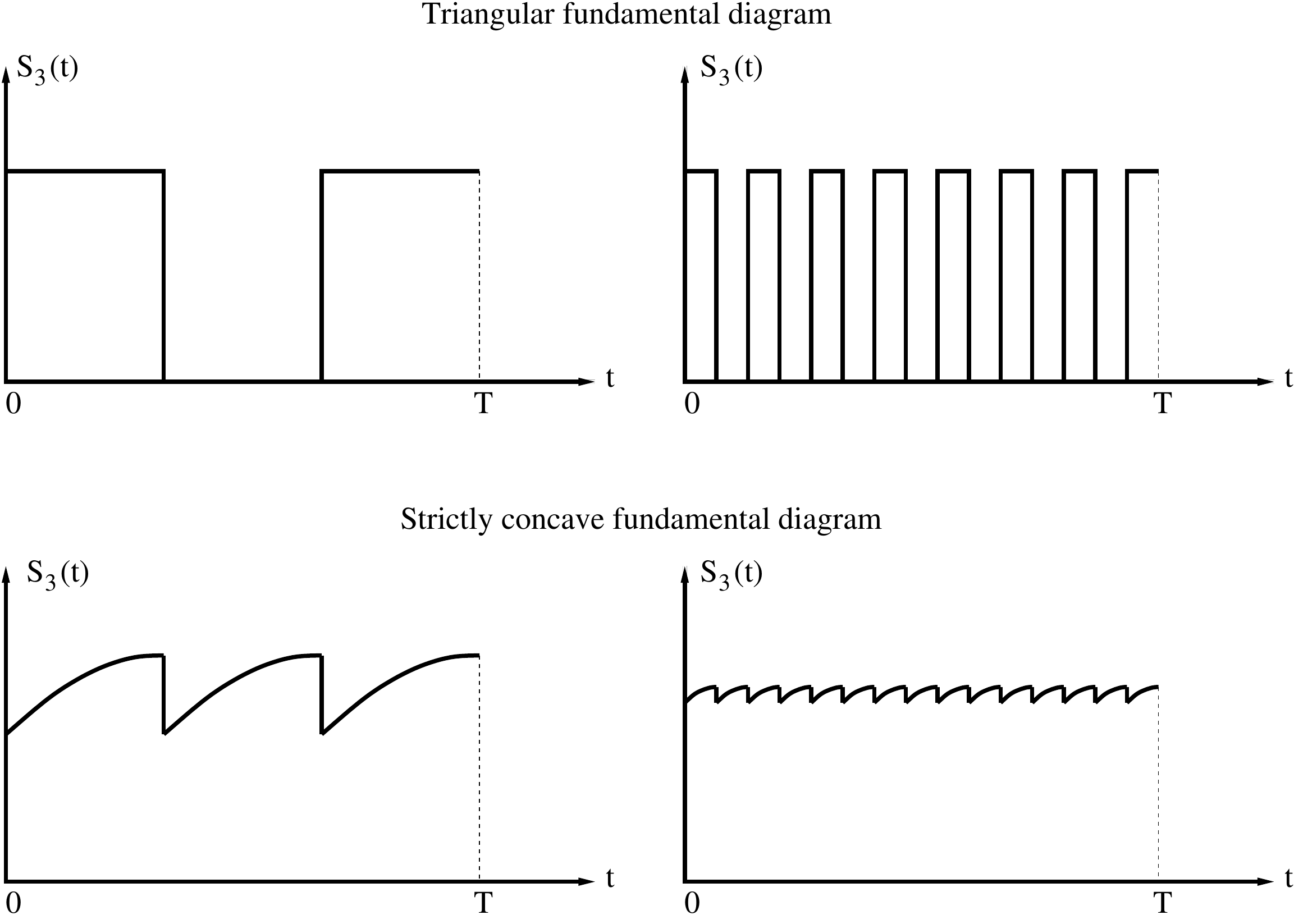}
\caption{Profiles of the supply observed at the entrance of $I_3$, when $I_3$ is  dominated by the congested phase and controlled by a signal $u_3(t)$ at the exit. First row: the triangular case; second row: the strictly concave case. First column: larger signal cycle; second column: smaller signal cycle.}
\label{figvariations1}
\end{figure}

We are now in a position ready to state and prove the convergence result for the strictly concave case. 

\begin{theorem}
Consider a  network with a fixed-cycle-and-split signal control at each node, where the flow dynamic on each link is governed by a Hamilton-Jacobi equation \eqref{HJE} with a strictly concave fundamental diagram.  We further assume that the entrance of each link may remain in the congested phase for some period of time, which spans at least several signal cycles. Then the solution of this network converges to the one corresponding to the continuum signal model, when the traffic signal cycles tend to zero.
\end{theorem}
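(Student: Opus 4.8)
The plan is to reduce the spillback case back to the no-spillback situation already settled in Theorem \ref{thmconvns} and its network corollary, using Lemma \ref{strictconcavespthm} to supply the one ingredient that is no longer automatic. In the no-spillback setting every link's entry supply equals its (constant) capacity, which is exactly what made the weak convergence \eqref{weakconveqn2} of the downstream data go through. With spillback the effective supply $\mathcal{S}_3^1(t)=\min\{C_1,S_3(t)\}$ instead oscillates with the downstream signal, and it is precisely this oscillation that destroyed convergence in the triangular case. Lemma \ref{strictconcavespthm} shows that under strict concavity the oscillation is damped: the entry supply of any persistently congested link converges uniformly to a constant as its downstream cycle tends to zero. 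So the first move is to record that, link by link, the effective supplies appearing in \eqref{oaodef} converge uniformly to constants---trivially for uncongested entries, and by Lemma \ref{strictconcavespthm} for congested ones.

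Granting this, I would run the argument of Theorem \ref{thmconvns} essentially verbatim. Writing $\mathcal{S}_3^1(t)=\bar S+\varepsilon_\Delta(t)$ with $\bar S$ constant and $\|\varepsilon_\Delta\|_\infty\to 0$, I would split
\begin{equation*}
N_{down}^{\Delta_A}(t)=\int_0^t \mathcal{S}_3^1(\tau)u_1(\tau)\,d\tau=\bar S\int_0^t u_1(\tau)\,d\tau+\int_0^t \varepsilon_\Delta(\tau)u_1(\tau)\,d\tau,
\end{equation*}
where the first term converges uniformly to $\bar S\,\eta_1 t$ by the weak convergence $u_1\rightharpoonup\eta_1$ (indeed $|\int_0^t u_1-\eta_1 t|\le\eta_1(1-\eta_1)\Delta_A$), and the second is bounded by $T\|\varepsilon_\Delta\|_\infty\to 0$. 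The continuum datum \eqref{N0def} converges to the same limit $\eta_1\bar S\,t$ once one checks that the limiting constant supply is common to both models; this should follow from conservation of vehicles, since the time-averaged exit flow at the downstream node agrees asymptotically in the two models (the green-phase exit flow is the capacity and $u_3\rightharpoonup\eta_3$), forcing the same limiting density and hence the same entry supply on the congested link. Uniform convergence $N_{down}^{\Delta_A}\to N_{down}^0$ in hand, the stability estimate recorded in the remark after Theorem \ref{thmconvns}, namely $|N^{\Delta_A}(t,x)-N^0(t,x)|\le\max_{\tau\le t}|N_{down}^{\Delta_A}(\tau)-N_{down}^0(\tau)|$, gives uniform convergence on the link, and the per-link errors accumulate linearly through the network exactly as in the corollary following Theorem \ref{thmconvns}.

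The hard part will be the coupling that spillback introduces. In the no-spillback corollary the links can be ordered so that each one's supply is an a priori constant and the induction proceeds downstream-to-upstream without feedback; here a congested link's supply is determined by its downstream neighbour's state, and congestion can propagate across a whole chain of links, so the supplies and the Moskowitz functions must be solved for jointly. The content of Lemma \ref{strictconcavespthm} is exactly what decouples this feedback in the limit, collapsing each oscillating supply to a constant; but to close the argument one must verify that its hypotheses (sustained congestion with a downstream fixed-cycle-and-split signal) hold uniformly along every congested chain, and that the resulting family of limiting constants is mutually consistent, i.e. solves the continuum merge relations \eqref{continuumdef} of the network. Establishing this consistency, especially when the spillback dependencies are cyclic, is the real obstacle, and it is where strict concavity is indispensable: it is the genuinely nonlinear damping behind Lemma \ref{strictconcavespthm}---and absent in the triangular case---that guarantees the limiting supplies exist and are constant in the first place.
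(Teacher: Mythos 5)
Your proposal follows essentially the same route as the paper's proof: invoke Lemma \ref{strictconcavespthm} to obtain uniform convergence of the congested link's entry supply to a constant, feed this into the downstream boundary datum, rerun the argument of Theorem \ref{thmconvns} via the stability estimate $|N^{\Delta_A}(t,x)-N^0(t,x)|\le\max_{\tau\le t}|N^{\Delta_A}_{down}(\tau)-N^0_{down}(\tau)|$, and let the per-link errors accumulate linearly over the network. Your explicit decomposition $\mathcal{S}_3^1=\bar S+\varepsilon_\Delta$ and the network-consistency obstacle you flag at the end are in fact more careful than the paper itself, which simply asserts the convergence of $\int_0^t\mathcal{S}_3^1(\tau)u_1(\tau)\,d\tau$ to $\eta_1\int_0^t\mathcal{S}_3^{1,*}(\tau)\,d\tau$ and states that the errors ``add up linearly'' without addressing the feedback among coupled congested links.
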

\begin{proof}
For the junction depicted in Figure \ref{figsimplemerge}, if the entrance of $I_3$ is in the uncongested phase, the convergence is proven in Theorem \ref{thmconvns}. If $I_3$ is dominated by the congested phase, Lemma \ref{strictconcavespthm} asserts that $S_3(t)$ converges to some $S_3^*(t)$ uniformly and in the $L^2$-norm as $\Delta_B\to 0$, which implies the same convergence $\mathcal{S}_3^1(t) \to \mathcal{S}_3^{1,*}(t)$, where $\mathcal{S}_3^1(t)=\min\{C_1,\,S_3(t)\}$, $\mathcal{S}_3^{1,*}(t)=\min\{C_1,\,S_3^*(t)\}$. Thus we have
$$
\int_{0}^t \mathcal{S}_3^1(\tau)u_1(\tau)\,d\tau~~\longrightarrow~~\eta_1\int_0^t \mathcal{S}_3^{1,*}(\tau)\,d\tau,\qquad \hbox{as}\qquad \Delta_A ~\to ~0
$$
uniformly for all $t\in[0,\,T]$. We then apply a proof similar to that in Theorem \ref{thmconvns} and conclude the Moskowitz function $N^{\Delta_A}(t,\,x)$ converges to $N^0(t,\,x)$ on $I_1$  as $\Delta_A\to 0$.  

Finally, notice that the approximation error of the continuum model adds up linearly through the network, thus such convergence holds on the whole network.
\end{proof}

\subsection{Approximation errors with spillback}
This section is devoted to establishing the approximation error of the continuum signal model when the signal cycles in the on-and-off model are not infinitesimal, and when spillback occurs at the intersection. In contrast to the non-spillback case, the difference  between the two models in the presence of spillback may be larger and grow with time.

Let us recall the signalized merge junction shown in Figure \ref{figsimplemerge}, and  focus on the link $I_1$. All notations employed earlier will remain in effect in this section.

\begin{theorem}\label{estthm2}{\bf (Error estimate with spillback)}
Under the same setting and notations of Theorem \ref{estthm1}, we assume that the entrance of  link $I_3$ may remain in the congested phase for some period of time, which spans at least several signal cycles. Then if the Hamiltonian is triangular, 
\begin{equation}\label{estthm2eqntri}
\left|N^{\Delta_A}(t,\,x)-N^0(t,\,x)\right|~\leq~\eta_1(1-\eta_1)\Delta_A \min\{C_1, C_3\} +\min\{C_1,\,C_3\}\,\eta_1 t
\end{equation}
where $C_1$ and $C_3$ denotes the flow capacity of link $I_1$ and $I_3$ respectively. If the Hamiltonian is strictly concave, 
\begin{equation}\label{estthm2eqnGreen}
\left|N^{\Delta_A}(t,\,x)-N^0(t,\,x)\right|~\leq~\eta_1(1-\eta_1)\Delta_A \min\{C_1, C_3\}+\min\left\{C_1,\,f\left((f')^{-1}\left({-L\over L/w+\Delta_B}\right)\right)\right\} \eta_1 t
\end{equation}
for all $(t,\,x)\in [0,\,T]\times[a_1,\,b_1]$, where $\Delta_B$ denotes the cycle length of signal $u_3(t)$ located at the downstream boundary of $I_3$, $f(\cdot)$ denotes the strictly concave fundamental diagram of $I_3$ with $f'(\rho_j)=-w$, $\rho_j$ and $L$ denotes the jam density and  the length of $I_3$ respectively.
\end{theorem}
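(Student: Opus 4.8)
The plan is to mirror the proof of Theorem~\ref{estthm1}: by the Remark following Theorem~\ref{thmconvns} it suffices to bound $\max_{\tau\in[0,t]}\bigl|N_{down}^{\Delta_A}(\tau)-N_{down}^0(\tau)\bigr|$, and since the two solutions $N^{\Delta_A}$ and $N^0$ are driven by the \emph{same} effective supply $\mathcal{S}_3^1(\cdot)$ (the one generated by the on-and-off signal $u_3$ at node $B$), formulas \eqref{NDeltadef}--\eqref{N0def} collapse this difference to a single integral,
\begin{equation*}
N_{down}^{\Delta_A}(t)-N_{down}^0(t)=\int_0^t\mathcal{S}_3^1(\tau)\bigl(u_1(\tau)-\eta_1\bigr)\,d\tau .
\end{equation*}
The decisive idea is to subtract off the \emph{peak} supply $M\doteq\sup_{\tau}\mathcal{S}_3^1(\tau)$ and write
\begin{equation*}
\int_0^t\mathcal{S}_3^1(u_1-\eta_1)\,d\tau=M\int_0^t(u_1-\eta_1)\,d\tau-\int_0^t\phi(\tau)\bigl(u_1(\tau)-\eta_1\bigr)\,d\tau,\qquad \phi\doteq M-\mathcal{S}_3^1\ge0.
\end{equation*}

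Because $\mathcal{S}_3^1(\tau)\le\min\{C_1,C_3\}$ by \eqref{effsupply}, the peak obeys $M\le\min\{C_1,C_3\}$, so the first integral is controlled exactly as in \eqref{nsestimate}: its modulus is at most $\eta_1(1-\eta_1)\Delta_A\min\{C_1,C_3\}$, which is the $\Delta_A$-term common to \eqref{estthm2eqntri} and \eqref{estthm2eqnGreen}. For the second integral I would exploit the sign pattern: $\phi\ge0$, while $u_1-\eta_1$ equals $1-\eta_1$ on green and $-\eta_1$ on red, so $-\phi(u_1-\eta_1)$ is nonpositive on green and nonnegative on red. Bounding $\phi$ by the oscillation range $A\doteq M-\inf_\tau\mathcal{S}_3^1(\tau)$ and matching the green/red durations against their nominal lengths $\eta_1 t$ and $(1-\eta_1)t$ yields
\begin{equation*}
\left|\int_0^t\phi\,(u_1-\eta_1)\,d\tau\right|\le A\,\eta_1(1-\eta_1)\,t\le A\,\eta_1 t ,
\end{equation*}
the sign bookkeeping being what removes the spurious factor $2$ coming from $\int_0^t|u_1-\eta_1|\,d\tau$. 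Everything is thus reduced to estimating the single scalar $A$, and this is the only place where the two fundamental diagrams behave differently.

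For a triangular diagram the red phase of $u_3$ sends $S_3$ down to $0$ while green restores it to $C_3$ (Figure~\ref{figScenario4}), so $\mathcal{S}_3^1$ sweeps all of $[0,\min\{C_1,C_3\}]$ and $A=\min\{C_1,C_3\}$, which turns the two displays into \eqref{estthm2eqntri}. For a smooth, strictly concave diagram the nonlinear damping of Lemma~\ref{strictconcavespthm} forces $A$ to shrink, and I would sharpen that lemma into an explicit range bound. Working in the space-reversed law \eqref{clawu'}, the flow at the entrance is transported along backward characteristics whose traversal time of $[0,L]$ is $T(\rho)=L/|f'(\rho)|$, ranging from $L/w$ at $\rho=\rho_j$ upward; over any single cycle the arriving characteristics occupy a traversal-time window of length $\Delta_B$, and since $f((f')^{-1}(-L/T))$ is steepest in $T$ at the jam end, the largest flow increment such a rarefaction can deliver in one cycle is attained on the window $[L/w,\,L/w+\Delta_B]$ and equals $f\bigl((f')^{-1}(-L/(L/w+\Delta_B))\bigr)$. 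Capping by $C_1$ gives $A\le\min\{C_1,f((f')^{-1}(-L/(L/w+\Delta_B)))\}$, i.e. \eqref{estthm2eqnGreen}. As a check this coefficient increases from $0$ (as $\Delta_B\to0$, since then $f'(\rho^*)\to f'(\rho_j)=-w$) to $\min\{C_1,C_3\}$ (as $\Delta_B\to\infty$, recovering the triangular amplitude), consistently with the convergence established for the strictly concave case.

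The main obstacle is making this sharp range estimate rigorous. The crude Oleinik bound \eqref{boundonjump} already shows $A=O(\Delta_B)$ with constant $|f'(\rho_j)|^3/(bL)$, but pinning down the exact coefficient $f((f')^{-1}(-L/(L/w+\Delta_B)))$ requires controlling the rarefaction--shock interaction at the upstream boundary: one must verify that, in the periodic regime, the worst-case one-cycle variation of $S_3$ at $x=0$ is indeed realized on the steepest segment of the fan and is not enlarged by the shocks that repeatedly form (Part~1 of the proof of Lemma~\ref{strictconcavespthm}). Granting this, the remaining steps—the reduction to the downstream conditions, the peak subtraction, and the sign argument—are routine, and the link $I_2$ is handled symmetrically.
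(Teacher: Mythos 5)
Your proof is correct and follows the paper's strategy in all essentials: reduce to the downstream cumulative curves via the remark after Theorem \ref{thmconvns}, identify the oscillation amplitude of the effective supply $\mathcal{S}_3^1$ as the coefficient of $\eta_1 t$, and bound that amplitude by $\min\{C_1,C_3\}$ in the triangular case and by the one-cycle rarefaction increment $f\bigl((f')^{-1}(-L/(L/w+\Delta_B))\bigr)$ in the strictly concave case --- the latter by exactly the paper's concavity-of-the-fan argument \eqref{concavity1}--\eqref{concavity2}. The only place you diverge is the middle step: the paper sandwiches both $N_{down}^{\Delta_A}(t)$ and $N_{down}^0(t)$ between the common envelopes $\min\{C_1,Y\}\,\eta_1 t$ and $\min\{C_1,X\}\,\eta_1 t$ and subtracts, whereas you subtract the peak supply $M$ and run a sign argument on $\phi\,(u_1-\eta_1)$. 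Your route yields a slightly sharper linear term, $A\,\eta_1(1-\eta_1)t$ in place of $A\,\eta_1 t$, at the cost of a small extra $O(A\Delta_A)$ correction coming from the fact that the green-time measure in $[0,t]$ equals $\eta_1 t$ only up to $\eta_1(1-\eta_1)\Delta_A$; this is the same order of bookkeeping the paper itself elides with its ``$\approx$'' in Case 1, so it is not a substantive gap. Finally, the point you flag as needing rigor --- that the worst one-cycle variation of the entrance flow is realized on the steepest window $[L/w,\,L/w+\Delta_B]$ of the fan and is not enlarged by the recurring shocks --- is precisely what inequality \eqref{concavity1} asserts in the paper, which likewise rests on Figure \ref{figspillbackpattern} and Lemma \ref{strictconcavespthm} rather than on a fully detailed wave-interaction analysis.
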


\begin{remark}
In case $f(\cdot)$ is not continuously differentiable at some $\rho_1$
and $f'(\rho_1-) >{-L\over L/w+\Delta_B}>f'(\rho_1+)$, then
we set 
$$
(f')^{-1}\left({-L\over L/w+\Delta_B}\right)~=~\rho_1
$$
The conclusion of Theorem \ref{estthm2} still holds.
\end{remark}

\begin{proof}

\noindent {\bf Case 1. (Triangular Hamiltonian)} When the entrance of link $I_3$ becomes congested, the scenario described in Figure \ref{figScenario4} occurs. As a result, the supply function $S_3(t)\in\{0,\,C_3\}$, which implies that $\mathcal{S}_3^1(t)\in\big\{0,\,\min\{C_1,C_3\}\big\}$.   Therefore for any $t\gg \Delta_A$,
$$
0~\leq~N_{down}^{\Delta_A}(t)~=~\int_0^t\mathcal{S}_3^1(\tau)\cdot u_1(\tau)\,d\tau~\leq~\min\{C_1, C_3\}\int_0^t u_1(\tau)\,d\tau~\approx \min\{C_1,C_3\} \eta_1  t
$$
$$
0~\leq~N_{down}^0(t)~=~\int_0^t \eta_1 \mathcal{S}_3^1(\tau)\,d\tau~\leq~\min\{C_1, C_3\}\int_0^t\eta_1 \,d\tau~=~\min\{C_1,C_3\}\eta_1t
$$
And we have that
$$
\left|N_{down}^{\Delta_A}(t) -N_{down}^0(t)\right|~\leq~\min\{C_1, C_3\}\eta_1 t
$$
Whenever the entrance of $I_3$ becomes uncongested, the additional difference between $N_{down}^{\Delta_A}(t)$ and $N_{down}^0(t)$ is always bounded by $\eta_1(1-\eta_1)\Delta_A \min\{C_1,C_3\}$ and independent of time, as we have shown in Theorem \ref{estthm1}. For any $t\in[0,\,T]$ we deduce, in the same way as in Theorem \ref{thmconvns}, that
$$
\left|N^{\Delta_A}(t,\,x)-N^{0}(t,\,x)\right|~\leq~\max_{\tau\in[0,\,t]}\left|N^{\Delta_A}_{down}(\tau)-N^0_{down}(\tau)\right|~\leq~ \big(\eta_1(1-\eta_1)\Delta_A+\eta_1t\big)\min\{C_1,C_3\}
$$

\noindent {\bf Case 2. (Strictly concave Hamiltonian)} When the entrance of  link $I_3$ is in the congested phase, our calculation in the proof of Lemma \ref{strictconcavespthm} shows that the density profile at the entrance of $I_3$ consists of shocks and rarefaction waves. In order to establish a close estimate of the magnitude of the jumps depicted in Part II of Figure \ref{figspillbackpattern}, we notice that the duration between two consecutive jumps is approximately $\Delta_B$. Define $[t_1,\,t_1+\Delta_B]$ to be the time interval between two consecutive jumps; then we have $t_1\geq t_0+L/w$ where $t_0$ denotes the time at which this rarefaction emerges. See Part I of Figure \ref{figspillbackpattern} for an illustration. Note that for every $t\in[t_0+L/w,\,t_1+\Delta_B]$ the map $t\mapsto f\left( (f')^{-1}\left({L\over t-t_0}\right)\right)$, which is given by the rarefaction wave, is a concave function in $t$. We  deduce that
\begin{align}
\label{concavity1}
\underbrace{f\left((f')^{-1}\left({-L\over t_1+\Delta_B-t_0}\right)\right)}_{X}-\underbrace{f\left((f')^{-1}\left({-L\over t_1-t_0}\right)\right)}_{Y}~\leq~&f\left((f')^{-1}\left({-L\over L/w+\Delta_B}\right)\right)-f\left((f')^{-1}\left({-L\over L/w}\right)\right)
\\
\label{concavity2}
~=~&f\left((f')^{-1}\left({-L\over L/w+\Delta_B}\right)\right)
\end{align}
Inequality \eqref{concavity1} is due to concavity and the fact that $t_1+\Delta_B-t_0\geq L/w+\Delta_B$. Notice that $X$ and $Y$ from \eqref{concavity1} are precisely the maximum and minimum of the flow variable $q(t,\,0)$ inside the time interval $[t_1,\,t_1+\Delta_B]$. Thus \eqref{concavity2} provides an upper bound on the jumps in flow and in the supply function $S_3(t)$.

Since $Y\leq S_3(t) \leq X$, we have that $\min\{C_1,\,Y\}\leq \mathcal{S}_{3}^1(t)\leq \min\{C_1,\,X\}$. Similar to {\bf Case 1}, we have the following estimates
\begin{align}
\min\big\{C_1,\,Y\big\}\, \eta_1 t~\leq~&\int_0^t\mathcal{S}_3^1(\tau)\cdot u(\tau)\,d\tau~\leq~\min\big\{C_1,\,X\big\}\, \eta_1 t
\\
\min\big\{C_1,\,Y\big\}\, \eta_1 t~\leq~&\int_0^t\eta_1\mathcal{S}_3^1(\tau)\,d\tau~\leq~\min\big\{C_1,\,X\big\}\, \eta_1 t
\end{align}
so 
\begin{align}
\left|N_{down}^{\Delta_A}(t) -N_{down}^0(t)\right|  ~\leq~& \max_{\tau\in[0,\,t]}\left|\int_0^t\mathcal{S}_3^1(\tau)\cdot u(\tau)\,d\tau - \int_0^t\eta_1\mathcal{S}_3^1(\tau)\,d\tau\right|
\nonumber\\
~\leq~& \left(\min\big\{C_1,~X\big\}- \min\big\{C_1,~Y\big\} \right)  \eta_1t ~\leq \min\big\{C_1,\,X-Y\big\}\, \eta_1 t
\nonumber \\
\label{altproof}
~\leq~&\min\left\{C_1,\,  f\left((f')^{-1}\left({-L\over L/w+\Delta_B}\right)\right) \right\}\eta_1 t
\end{align}
Again, an additional term $\eta_1(1-\eta_1)\Delta_A \min\{C_1, C_3\}$ is attached to the error when the entrance of $I_3$ is in the uncongested phase. This establishes \eqref{estthm2eqnGreen}.
\end{proof}

Notice that  in the presence of spillback, the errors associated with triangular or strictly concave fundamental diagrams both grow with time. However, the error in the strictly concave case is much smaller than the triangular case. This is quite clear from Remark \ref{nonlinearremark}  and will be numerically verified later in Section \ref{secNumerical}. Inequalities \eqref{altproofconv1}, \eqref{estthm2eqntri} and \eqref{estthm2eqnGreen} are three of the most significant expressions in this paper, as they not only provide comprehensive error estimates, but also explain the convergence/non-convergence we established earlier when the signal cycles tend to zero: that is, by setting $\Delta_A\rightarrow 0$, $\Delta_B\rightarrow 0$, the right hand side of \eqref{estthm2eqnGreen} tends to zero, while the right hand side of \eqref{estthm2eqntri} does not.

\begin{remark}
The different errors associated with the triangular (not strictly concave) fundamental diagram and the strictly concave one can be interpreted in prose as follows: the more concave a fundamental diagram is, the more nonlinear its characteristic filed is, and the more cancellation to the flow variation it causes, hence the less error we find in the approximation. Notice that when $f(\cdot)$ is twice differentiable, the error estimates can be stated in terms of the second derivative using the same Oleinik-type estimate as we did in Lemma \ref{strictconcavespthm}, that is, the magnitude of the oscillation in supply $S_3(t)$ in the presence of spillback is bounded by 
\begin{equation}\label{myquant}
\Delta_B\cdot{-\big[f'(\rho_j)\big]^3\over bL}
\end{equation}
such quantity is directly related to the approximation error. Recall that the constant $b$ is such that $f''(\rho)\leq -b,~\forall\rho$; thus it is a direct indicator of how concave the fundamental diagram is. In particular, if $f(\cdot)$ is triangular, then $b=0$ and the quantity \eqref{myquant} blows up, resulting in the maximum error possible. Such interpretation using the second derivative provides further insight into the importance of strict concavity in reducing the approximation error. 
\end{remark}

\section{Continuum approximation in the presence of  transient spillback}
\label{secTransient}
In the previous two sections, we have considered the case with and without spillbacks. In both cases, it is assumed that the uncongested phase or the congested phase persists at the entrance of link $I_3$ for a significant period of time, usually on the scale of several signal cycles. Such assumption is crucial for our analysis since the continuum signal model is one type of aggregate models which approximates the cumulative throughput of a signalized junction within at least one full signal cycle. If the spillback/non-spillback state of the system fluctuates on a much smaller time scale, say shorter than a full cycle, then the previously established convergence and error estimates may not hold. We will refer to such situation as {\it transient spillback}.

Let us illustrate the impact of transient spillback on the approximating quality of the continuum signal model using a specific example. Consider the three-incoming, one-outgoing signal junction shown in Figure \ref{figfour}. Assume vehicles flow into links $\text{a}_1$ and $\text{a}_3$ with the maximum rate (flow capacity), while link $\text{a}_2$ remains empty. Let $\text{a}_4$ be congested. In addition, assign equal signal split of 1/3 to each incoming link. We also stipulate that the sequence of links allowed to enter $\text{a}_4$ is $\text{a}_1,\,\text{a}_2,\,\text{a}_3,\,\text{a}_1,\,\text{a}_2,\,\text{a}_3\ldots $.

\begin{figure}[h!]
\centering
\includegraphics[width=.35\textwidth]{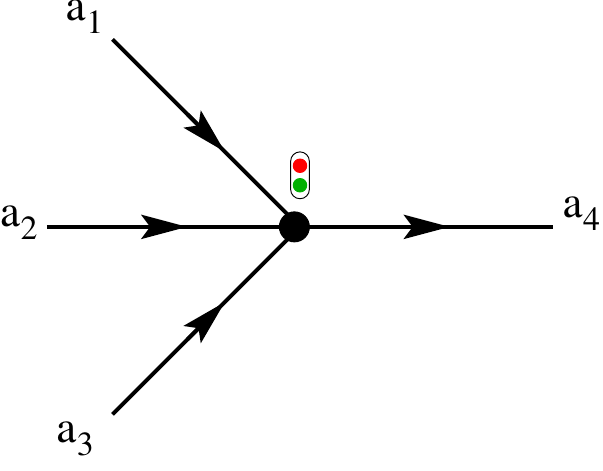}
\caption{The signalized junction with three incoming links and one outgoing link.}
\label{figfour}
\end{figure}

In an on-and-off scenario, when the light for $\text{a}_1$ is green, the supply from downstream is limited since $\text{a}_4$ is in the congested phase. On the other hand, when the light turns green for $\text{a}_3$, due to the previous green phase for the empty link $\text{a}_2$ which allows the entrance of $\text{a}_4$ to clear up a little bit, the supply from $\text{a}_4$ will be maximum and equal to its flow capacity. When vehicles from $\text{a}_3$ fill up this empty space on link $\text{a}_4$, vehicles from $\text{a}_1$ are once again faced with a limited downstream supply. Therefore, it is reasonable to expect the throughputs of links $\text{a}_1$ and $\text{a}_3$ are quite different, despite the fact that they are assigned equal signal split. Such an asymmetric situation will persist even if signal cycles tend to zero.

A numerical simulation is conducted to confirm this observation. Figure \ref{figasymmetric} shows the throughputs (cumulative exiting vehicle curves) of links $\text{a}_1$ and $\text{a}_3$, when then same signal split of 1/3 is assigned to each approach. In these figures, the bifurcation point of the cumulative curves indicates the first time spillback occurs. We clearly observe that convergence of the on-and-off model to the continuum model does not hold, no matter how small the signal cycle is. Figure \ref{figasysupply} shows the supply profiles on link $\text{a}_4$, where we observe the predicted transient spillbacks (where the supply is lower). Such transient spillback resonates with the signal phases, causing links $\text{a}_1$ and $\text{a}_3$ to face completely different downstream supplies when their respective lights are green.

\begin{figure}[h!]
\centering
\includegraphics[width=\textwidth]{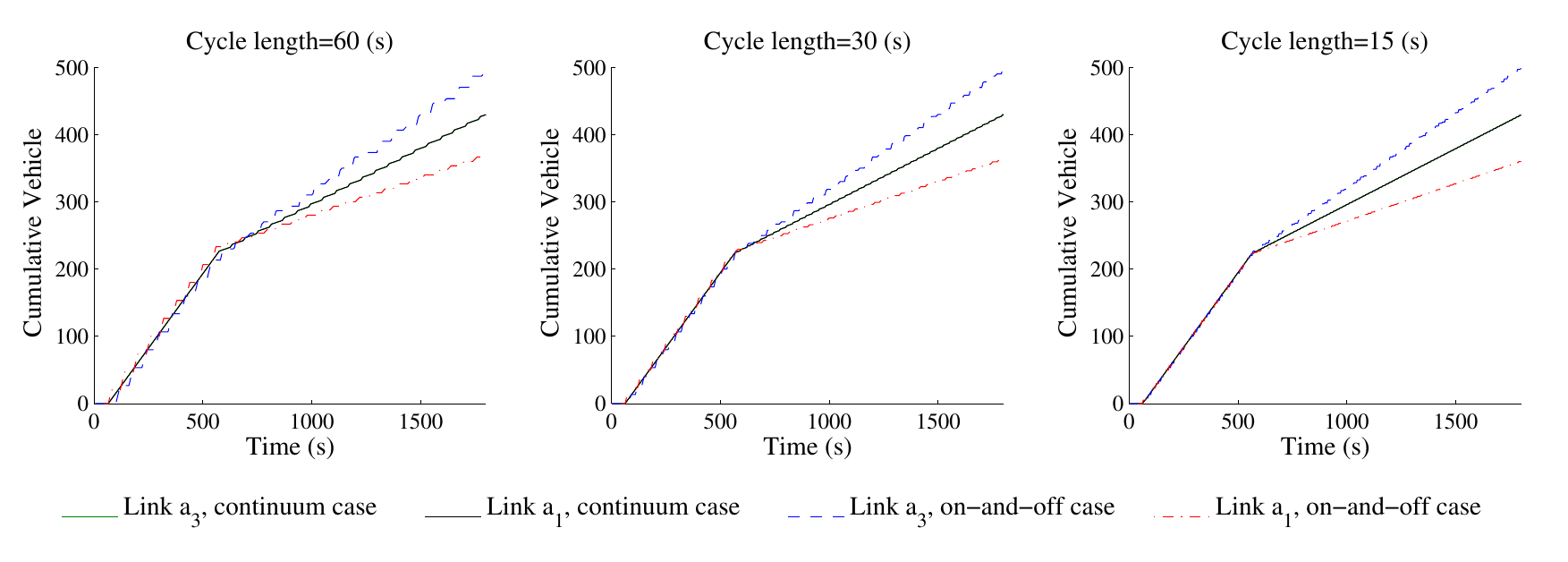}
\caption{The cumulative exiting vehicle counts on links $\text{a}_1$ and $\text{a}_3$, with the continuum model and the on-and-off model.}
\label{figasymmetric}
\end{figure}

\begin{figure}[h!]
\centering
\includegraphics[width=\textwidth]{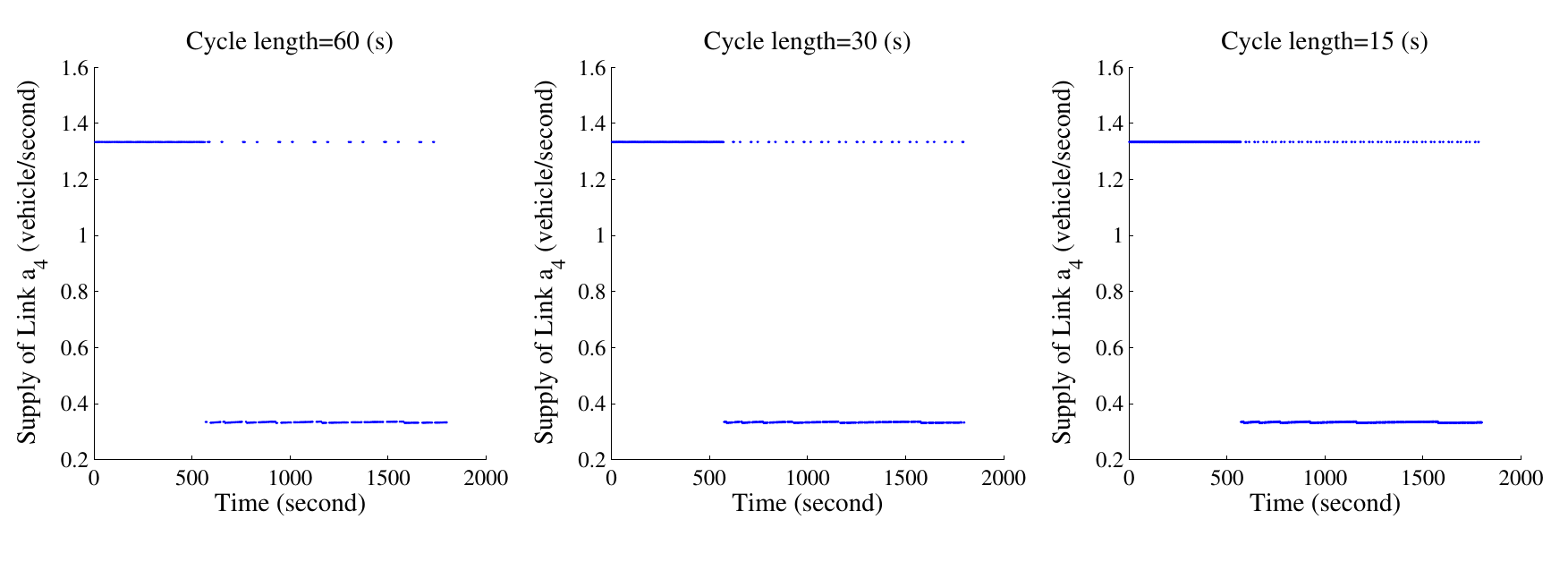}
\caption{The supply functions of $\text{a}_4$. The lower value ($\approx$ 0.33) indicates the  presence of spillback; the higher value ($=$ 4/3), which is equal to the flow capacity, indicates the absence of spillback. The fast switching between these two values implies the presence of transient spillback.}
\label{figasysupply}
\end{figure}

The computation presented above employs the LWR model with a Greenshields fundamental diagram \citep{Greenshields}. However, it is not difficult to conclude that such non-convergence will hold for any type of fundamental diagram. Such example reveals a technical difficulty arising from the theoretical investigation of the continuum approximation, although one may argue that modeling these phenomena exactly loses importance in large scale applications. Therefore, this should not completely diminish the value of the continuum signal model in the venue of engineering applications.

\section{Numerical Study}\label{secNumerical}
The goal of this section is to numerically verify the convergence results and error analysis established in the previous sections. Let us again focus on the merge node depicted in Figure \ref{figsimplemerge}, with signal controls at the exits of $I_1,\,I_2$ and $I_3$. Two types of fundamental diagrams are considered in this numerical study: the triangular fundamental diagram \eqref{triangularfd} and the Greenshields fundamental diagram \citep{Greenshields}
\begin{equation}\label{greenshieldsdef}
f(\rho)~=~\rho v_0\left(1-{\rho\over\rho_j}\right)
\end{equation}
where $v_0$ denotes the free-flow speed, $\rho_j$ denotes the jam density. Link parameters related to these two fundamental diagrams are given in Table \ref{tabparameter}. For simplicity, all  three links are assumed to have the same parameters.

\begin{table}[h!]
\centering
\begin{tabular}{|c|c|c|c|c|}\hline
                & Free-flow speed    & Jam density   & Critical density   & Flow capacity
\\                
                & (meter/second)    &  (vehicle/meter)   & (vehicle/meter)& (vehicle/second)
\\                
\hline                 
 Triangular fd  & 40/3  & 0.4  & 0.1  & 4/3
 \\
 Greenshields fd  &  40/3  & 0.4  & 0.2  & 4/3  \\
\hline

\end{tabular}
\caption{Link parameters}
\label{tabparameter}
\end{table}

\subsection{Without spillback}
Assume that the entrance of $I_3$ remains in the uncongested phase so that spillback does not occur. In addition, we let the signal control $u_1(t)$ for $I_1$ satisfy: $\Delta_A=60$ seconds,  $\eta_1=0.5$. Thus the theoretical error bound given by Theorem \ref{estthm1} is $\eta_1(1-\eta_1)\Delta_A C_3=20$ (vehicles). The Moskowitz functions for $I_1$ are shown  in Figure \ref{fignospillback6}, where both the on-and-off signal model and the continuum signal model are employed. It is clearly observed that the continuum signalized junction model yields very good approximation to the one with the on-and-off signal controls, for both triangular and Greenshields fundamental diagrams. In particular,  the absolute differences in the Moskowitz functions are uniformly bounded by 20 (vehicles) and are independent of time, which coincides with the theoretical result established in Theorem \ref{estthm1}. We are also assured that such errors are almost unobservable using the normal scales of the Moskowitz functions.

\begin{figure}[h!]
\centering
\includegraphics[width=\textwidth]{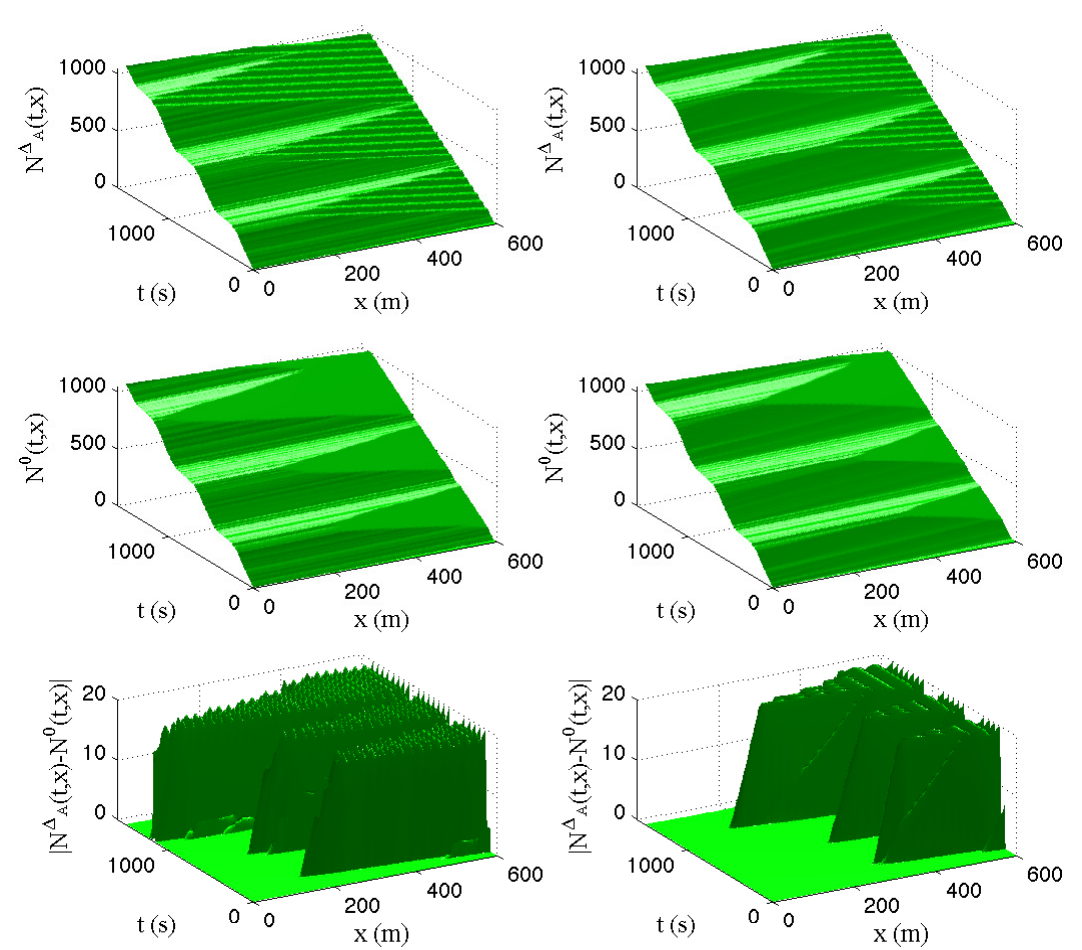}
\caption{Moskowitz functions on link $I_1$ with the triangular fundamental diagram (left column) and the Greenshields fundamental diagram (right column). The first row corresponds to the on-and-off signal model; the second row corresponds to the continuum signal model; the third row presents the absolute differences in the Moskowitz functions from the first two rows.}
\label{fignospillback6}
\end{figure}

\subsection{With spillback}

\subsubsection{Triangular fundamental diagram}
Assume that  link $I_3$ is dominated by the congested phase, then the supply function $S_3(t)$ is illustrated in Figure \ref{figScenario4}. Let us now examine the difference $|N_{down}^{\Delta_A}(t)-N_{down}^0(t)|$ for link $I_1$. First notice that it is entirely possible that whenever the control $u_1(t)=1$, the supply $S_3(t)=0$. In this case we have $N_{down}^{\Delta_A}(t)\equiv 0$ since no car can go through, while $N_{down}^0(t)$ is proportional to the integral of $S_3(t)$. Thus huge error is expected in this case. This  is illustrated in Figure \ref{figspillbacktri} with two different values of $\eta_1$. We see from these figures that when spillback occurs, the errors grow with time roughly linearly and are within the theoretical bounds provided by \eqref{estthm2eqntri}. We also observe from the upper right picture that the established error bounds are tight, as they can be approached in some actual cases.

\begin{figure}[h!]
\centering
\includegraphics[width=\textwidth]{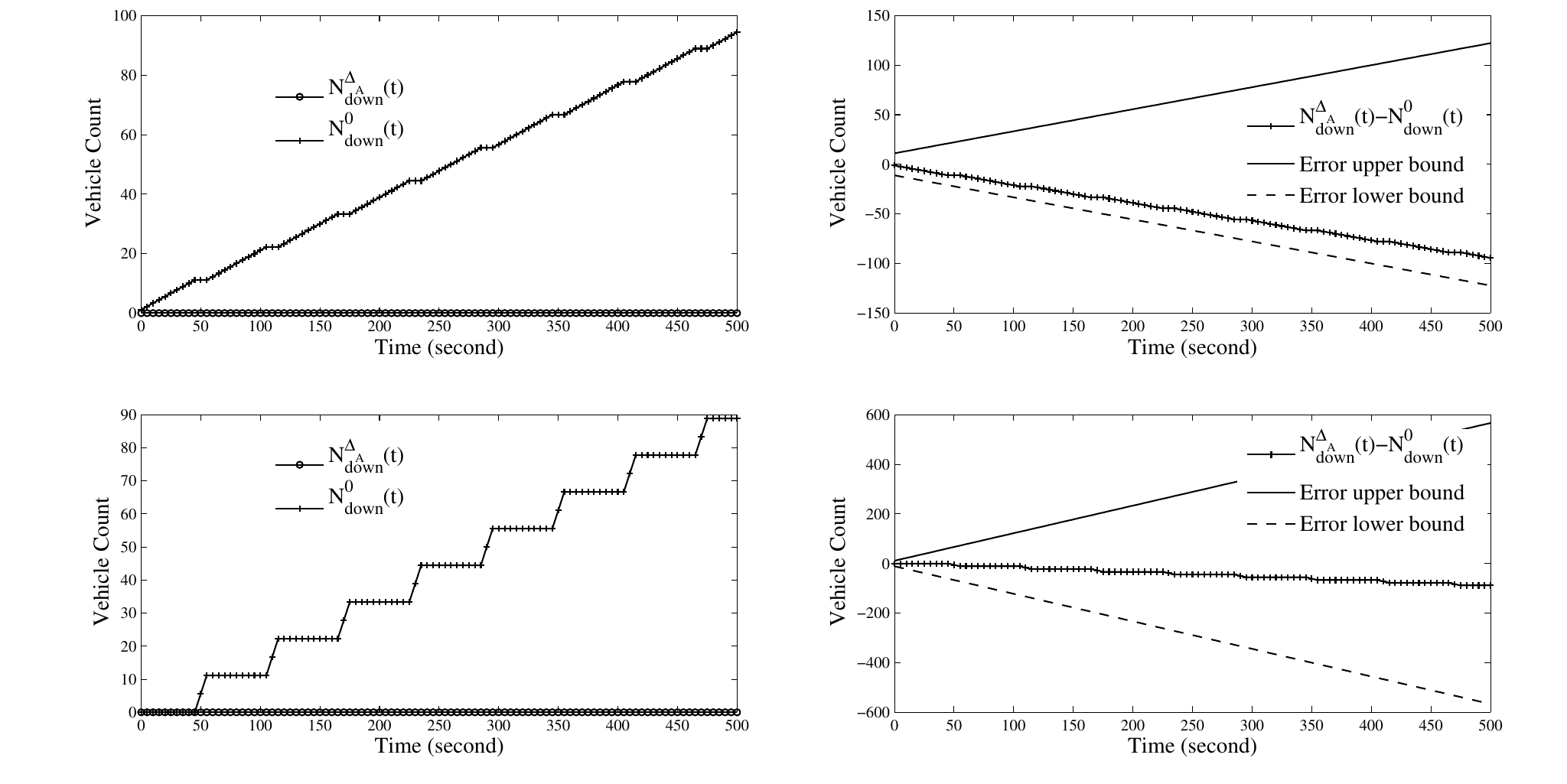}
\caption{Comparison of $N_{down}^{\Delta_A}(t)$ and $N_{down}^0(t)$ under a specific scenario where $u_1(t)=1 \Longrightarrow S_3(t)=0$. In this case $N_{down}^{\Delta_A}(t)\equiv 0$. First row: $\eta_1=1/6$. Second row: $\eta_1=5/6$.}
\label{figspillbacktri}
\end{figure}

\subsubsection{Smooth and strictly concave fundamental diagram}
Let us turn to the case with a strictly concave fundamental diagram, for instance, the Greenshields fundamental diagram. The damping effect on the flow variations caused by the strictly concave Greenshields fundamental diagram is demonstrated in Figure \ref{figspillbackTV1}, where the supply at the entrance of $I_3$ is plotted with different choices of parameters. Notice that when the fundamental diagram $f(\cdot)$ takes the explicit form of \eqref{greenshieldsdef}, the upper bound  on the jumps given by \eqref{concavity2} is expressed more accurately as follows \citep{HPGFY}:
\begin{equation}\label{GSjumpbound}
{\rho_j v_0^2\Delta_B\over 4}{2L+v_0\Delta_B\over (L+v_0\Delta_B)^2}
\end{equation}

It is clearly observed from Figure \ref{figspillbackTV1} that while the exit flow of $I_3$ can have a big variation due to the presence of the signal $u_3(t)$, the supply function on the other end of the link has a reduced variation,   in particular when the cycle $\Delta_B$ decreases and when the length $L$  increases. This is consistent with \eqref{GSjumpbound}.  To further verify the bounds, Table \ref{tabjumpbd} below compares the computed jumps in supply with the theoretical bound \eqref{GSjumpbound}. From this table we see that the approximation error conveyed by inequality \eqref{estthm2eqnGreen} (a) is valid and correct; and (b) provides a tight bound of the error.

\begin{figure}[h!]
\centering
\includegraphics[width=\textwidth]{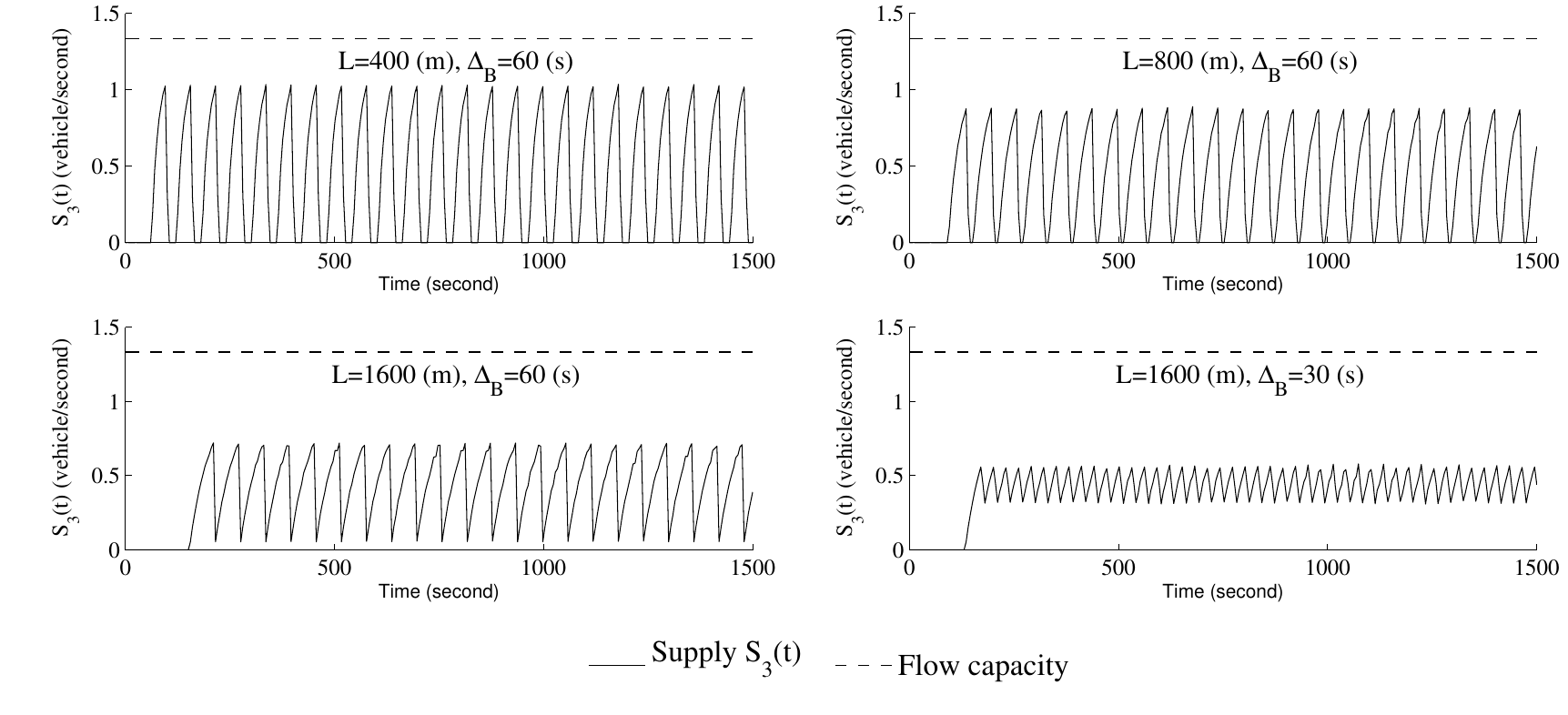}
\caption{The supply function $S_3(t)$ at the entrance of link $I_3$, when $I_3$ is dominated by the congested phase. The signal split for $I_3$ is fixed to be 1/3. The broken lines indicate the link flow capacity. Several values of link length $L$ and signal cycle $\Delta_B$ are considered, and the jumps in the supply are consistent with the theoretical bound \eqref{concavity2}.}
\label{figspillbackTV1}
\end{figure}
\begin{table}[h!]
\centering
\begin{tabular}{|c|c|c|c|c|}\hline
                & $L~=~400$ (m)   & $L~=~800$ (m)   & $L~=~1600$ (m)  & $L~=~1600$ (m)
\\                
                & $\Delta_B~=~60$ (s)    &  $\Delta_B~=~60$ (s)   & $\Delta_B~=~60$ (s)  & $\Delta_B~=~30$ (s)
\\                
\hline                 
Computed jump  & 1.04  & 0.89  & 0.67  & 0.43
 \\
 Theoretical bound  &  1.19  & 1.0  & 0.74  & 0.48  \\
\hline

\end{tabular}
\caption{Comparison of the actual jumps in the supply profile $S_3(t)$ with the theoretical bound on the jumps, when link $I_3$ is completely in the congested phase.}
\label{tabjumpbd}
\end{table}

In Theorem \ref{estthm2} we showed that the continuum approximation error grows with time when spillback occurs, and that the error increases roughly linearly for both the triangular fundamental diagram and the strictly concave fundamental diagram, see \eqref{estthm2eqntri} and \eqref{estthm2eqnGreen}. In order to numerical verify such results, we consider the merge junction from Figure \ref{figsimplemerge} with link $I_3$ in the congested phase so that spillback occurs at intersection $A$. The traffic signal at intersection $B$ has a cycle of $60$ seconds and a split ration of $1/2$ for $I_3$. The length of link $I_3$ is $400$ meters. We set the signal cycle at $A$ to be $60$ seconds, and experiment with two different split values for $I_1$: $\eta_1=1/3$ and $\eta_1=2/3$. Two cases with respectively the triangular fundamental diagram and with the Greenshields fundamental diagram, whose numerical specifications are provided in Table \ref{tabparameter}, are computed and the results are shown in Figure \ref{figlineargrowth}. The results indeed confirms that when spillback happens, both cases have errors that grow linearly with time. Moreover, the Greenshields case yields a smaller error, which coincides with out theoretical findings.

\begin{figure}[h!]
\centering
\includegraphics[width=\textwidth]{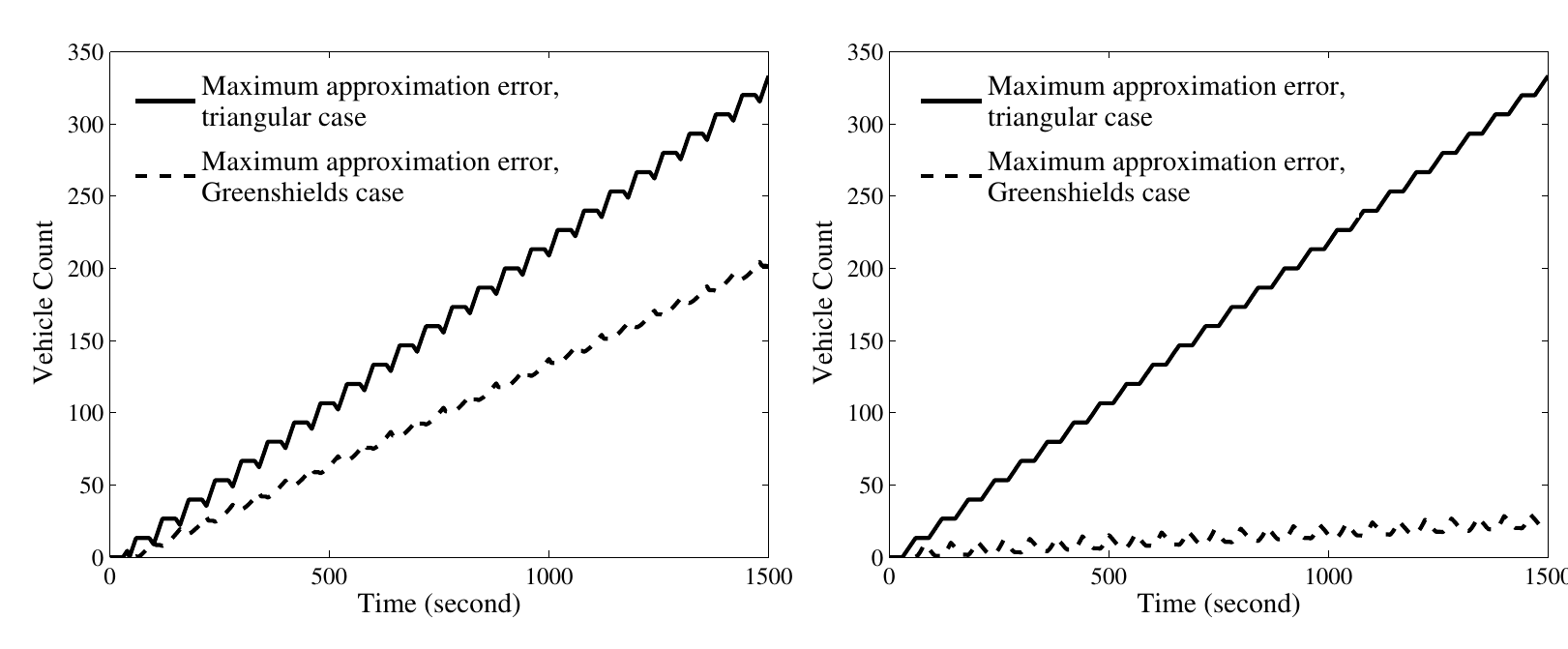}
\caption{Linear growth of the maximum approximation error (MAE): $\max_{t, x}\left|N^{\Delta_A}(t,\,x)-N^0(t,\,x)\right|$, when spillback occurs at the intersection. The triangular case and the Greenshields case are compared. Left: $\eta_1=1/3$. Right: $\eta_1=2/3$. The case with the Greenshields fundamental diagram has a smaller error.}
\label{figlineargrowth}
\end{figure}

Finally, in Figure \ref{figspillbackMaxerror} we show the maximum absolute difference between $N^{\Delta}(t,\,x)$ and $N^0(t,\,x)$ of link $I_1$ for all $(t,\,x)\in[0,\,T]\times[a_1,\,b_1]$, when the downstream link $I_3$ is entirely congested. We show such error with both triangular and Greenshields fundamental diagrams and for different values of signal cycle $\Delta_A$ and split $\eta_1$. Compared to the triangular fundamental diagram, the strictly concave (Greenshields) one yields a much lower error since the supply function $S_3(t)$ has a smaller variation due to the nonlinear effect discussed in Remark \ref{nonlinearremark} and verified in Figure \ref{figspillbackTV1}.

\begin{figure}[h!]
\centering
\includegraphics[width=\textwidth]{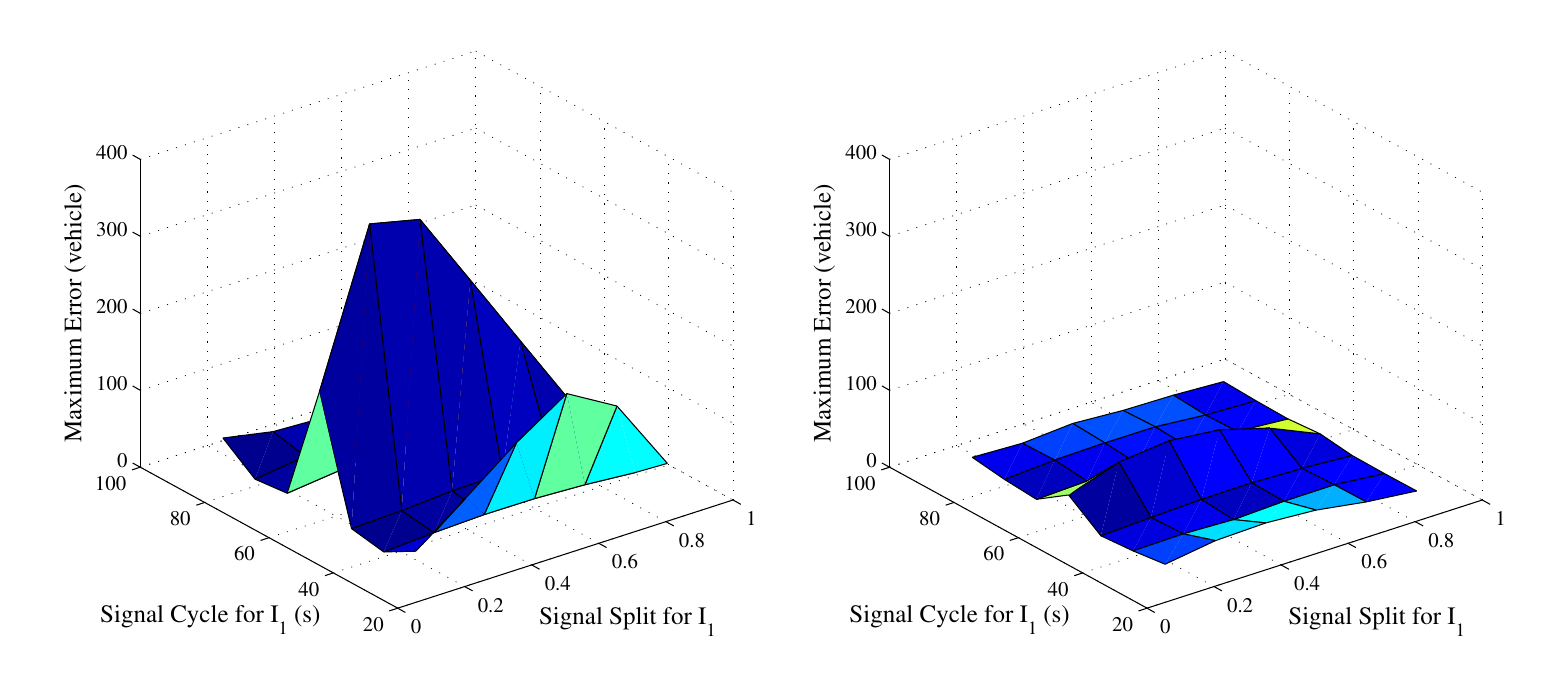}
\caption{Maximum difference  $|N^{\Delta_A}(t,\,x)-N^0(t,\,x)|,\, t\in[0,\,1500]$ (second) for link $I_1$, when the signal control for $I_3$ satisfies $\Delta_B=60$ (second), $\eta_3=0.67$ and when $I_3$ is entirely in the congested phase (spillback occurs). Left: the triangular fundamental diagram. Right: the Greenshields fundamental diagram.}
\label{figspillbackMaxerror}
\end{figure}

\section{Application}\label{secApp}

\subsection{Mixed integer linear programming approach for optimal signal control}\label{secMILP}
In a signal optimization process usually realized by mixed integer mathematical programs, usage of the  continuum signal model has several distinct advantages over the on-and-off one, such as those mentioned at the introductory part of this paper. This section presents a concrete example that demonstrates such advantages. We will provide two mixed integer linear programing (MILP) formulations using the continuum and the on-and-off signal models respectively, that aim at optimizing the dynamic network profile with proper constraints. Unlike many existing approaches that employ a cell-based dynamic, we consider a link-based kinematic wave model \citep{LKWM},  also known as the {\it link transmission model} \citep{LTM}, in order to reduce the number of (integer) variables involved in the program. These MILP formulations will not be elaborated here but are instead moved to the Appendix. A somewhat more comprehensive discussion of the MILP formulation is available in \cite{MIPsignal}.

\subsection{Numerical experiment}\label{secexperiment}

In this section, we will solve the two mixed integer linear programs (MILP) using the on-and-off signal model and the continuum signal model respectively on the same traffic network. Performances of these two MILPs and their outcomes will be compared, which illustrates the advantages of using the continuum signal model over the on-and-off one.

We consider the network depicted in Figure \ref{figComparisonntw} with three signalized intersections $A$, $B$ and $D$. Note that node $C$ is a diverge junction with no conflict of flows, therefore a signal control is not present. Traffic dynamics on each link are governed by the LWR model with a triangular fundamental diagram \footnote{The demonstrated disadvantage of using the triangular fundamental diagram in the continuum signal model is circumvented by explicitly imposing in the programs that spillback does not occur at any junction. The reason is that: 1) a non-spillback situation is reasonable  to maintain in a signal optimization process; and 2) the continuum model yields a good approximation of the on-and-off model in the absence of spillback.}. All the links in the network are assumed to have the same attributes as given in Table \ref{tabparameter}. In addition, the length of each link is set to be 400 meters.

The signal cycle length and the time step in the on-and-off model is fixed to be $60$ seconds and $10$ seconds, respectively. In other words, signal control within each cycle is determined by six binary variables. For practical reasons, we stipulate that the green time and the red time must be no less than 20 seconds, so that the signal split variable can take on only three values: $1/3$, $1/2$, and $2/3$. Moreover, in order to adapt the signal controls to a dynamic decision environment, we allow the signal splits in both the on-and-off case and the continuum case to change every 5 minutes.

\begin{figure}[h!]
\centering
\includegraphics[width=.5\textwidth]{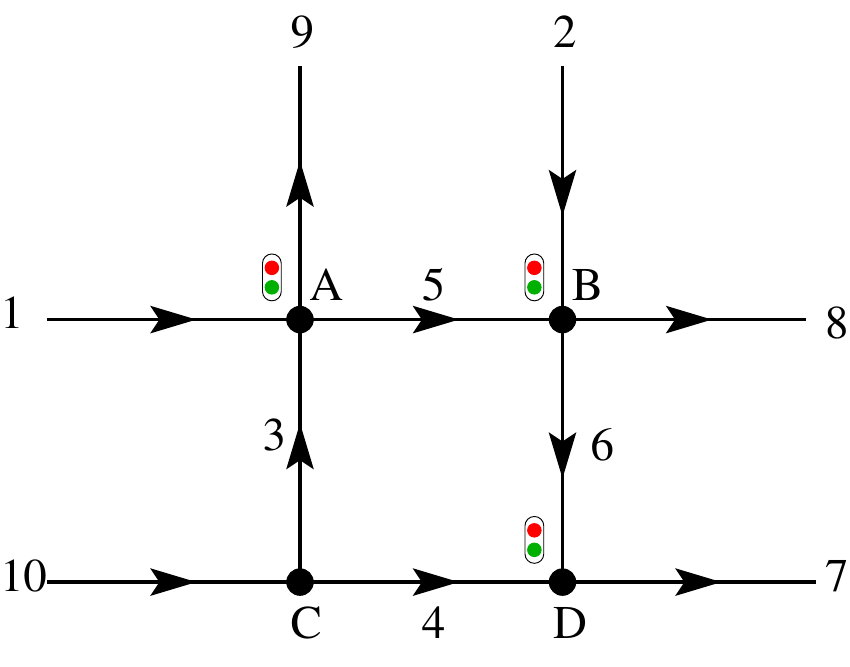}
\caption{The test network where two MILPs are performed.}
\label{figComparisonntw}
\end{figure}

\subsubsection{Performances of the two MILPs}

The inflows into the test network are randomly generated and remain the same for all the computation scenarios mentioned below.  Given that decisions on signal splits are made for every 5 minutes, we solve the MILPs on the network for time periods of 5,\,10,\,15,\, and 20 minutes. The computational times are presented in the first and second rows of Table \ref{tabtimecompare} \footnote{All the MILPs were solved with CPLEX on the Penn State High Performance Computing Systems; see http://rcc.its.psu.edu/resources/hpc/ for more details.}. Furthermore, we increase the time step size in the continuum case from 10 seconds to 30 seconds; this can be done since the time step in the continuum model is not constrained by the signal cycle or the split, which is in contrast to the on-and-off case. Results in such scenario is presented in the third row of Table \ref{tabtimecompare}. Notice that with a larger time step $h=30$ seconds,  the continuum-based MILP is capable of solving for a much larger time period, namely, one hour. This is shown in the last column of Table \ref{tabtimecompare}. In Table \ref{tabvariablecompare}, we summarize some basic information of the MILPs such as the number of continuous or binary variables involved. The results again demonstrate the computational efficiency obtained by considering the continuum signal control.

\begin{table}[h!]
\centering
\begin{tabular}{|c|c|c|c|c|c|}\hline
    Time span        &  5 min    &  10 min & 15 min  & 20  min & 60 min
\\                
\hline                 
 On and off ($h=10$ s)  & 0.44 s & 57.23  s & 249.14 s &  -  & -
 \\
 Continuum ($h=10$ s) &  0.25 s  & 9.50 s &  49.42 s   &  241.32 s  & -
 \\
 Continuum ($h=30$ s)  &  0.75 s  &  1.91 s & 3.17 s   &  9.66  s  & 130.93 s \\
\hline
\end{tabular}
\caption{Comparison of computational times of the MILPs. $h$ denotes the time step size. ``-" means that the MILP was not solved within the prescribed limit on computational time or memory usage.}
\label{tabtimecompare}
\end{table}

\begin{table}[h!]
\centering
\begin{tabular}{|c|c|c|c|}\hline
  \#  of CVs (\# of BVs)         &  5 min     & 15 min   & 60 min
\\                
\hline                 
 On and off ($h=10$ s)  & 900 (600)  &  2700 (1800)  & 10800 (7200)
 \\
 Continuum ($h=30$ s)  &  400 (200)    &  1200 (600)    & 4800 (2400) \\
\hline
\end{tabular}
\caption{Comparison of the number of continuous variables (CV) and the number of binary variables (BV) in the MILPs. $h$ denotes the time step size.}
\label{tabvariablecompare}
\end{table}

\subsubsection{Solution quality}
In Table \ref{tab90slns} we show, for a time period of 15 minutes, the optimal signal splits in the on-and-off case and in the continuum case provided by the two MILPs. We observe not only very different signal strategies in both cases, but also splits in the continuum case that are nontrivial and difficult to accommodate by the on-and-off signal model.

\begin{table}[h!]
\begin{center}
\begin{tabular}{c|c|c|c|c|c|c|}\cline{2-7}

& \multicolumn{2}{|c|}{0 - 5 min} & \multicolumn{2}{|c|}{5 - 10 min}  & \multicolumn{2}{|c|}{10 - 15 min}  
\\
\cline{2-7}
&  OAO  & Cont  & OAO  & Cont  & OAO  & Cont 
\\
\hline
\multicolumn{1}{|c|}{\multirow{1}{*}{link 1}} &  2/3  &  0.5  &  2/3  &  0.5  &  1/2  &  0.6957  
\\
\hline
\multicolumn{1}{|c|}{\multirow{1}{*}{link 2}} &  2/3  &  0.6667  &  2/3  &  0.6667  &  2/3  &  0.7
\\
\hline
\multicolumn{1}{|c|}{\multirow{1}{*}{link 3}} &  1/3  &  0.5  &  1/3  &  0.5  &  1/2  &  0.3043  
\\
\hline
\multicolumn{1}{|c|}{\multirow{1}{*}{link 4}} &  1/2  &  0.6333  &  1/3  &  0.6333  & 1/3   &  0.3  
\\
\hline
\multicolumn{1}{|c|}{\multirow{1}{*}{link 5}} &  1/3  &  0.3333  &  1/3  &  0.3333  &  1/3  &  0.3  
\\
\hline
\multicolumn{1}{|c|}{\multirow{1}{*}{link 6}} &  1/2  &  0.3667  &  2/3  &  0.3667  &  2/3  &  0.7 
\\
\hline

\end{tabular}
\end{center}
\caption{Signal split for each link as a result of the MILPs with the on-and-off (OAO) signal control and the continuum (Cont) signal control.}
\label{tab90slns}
\end{table}

In order to verify the approximation accuracy of the continuum signal model, we conduct the following calculation. The signal splits shown in Table \ref{tab90slns} corresponding to the on-and-off case are taken as given parameters to simulate the dynamic signalized network using both on-and-off and continuum models. The respective network throughputs, expressed by the cumulative exiting vehicle counts on links 7, 8, and 9, are compared in Figure \ref{figThroughputs}. Notice that for the continuum model, we employ both a smaller time step (10 seconds) and a larger time step (30 seconds) for comparison with the on-and-off case. The differences between these network throughputs are consistent with the established theoretical bounds, indicating the effectiveness of the continuum model in approximating the on-and-off model in the absence of vehicle spillback. In particular, we see that the continuum model yields a good approximation of the on-and-off model even when the time step increases significantly (30 seconds). This is because the error estimates established in Theorem \ref{estthm1} is in continuous time and independent of the time step selected for discrete-time computations. Such fact further illustrates the robustness of the continuum signal model.

\begin{figure}[h!]
\centering
\begin{minipage}{\textwidth}
\centering
\includegraphics[width=\textwidth]{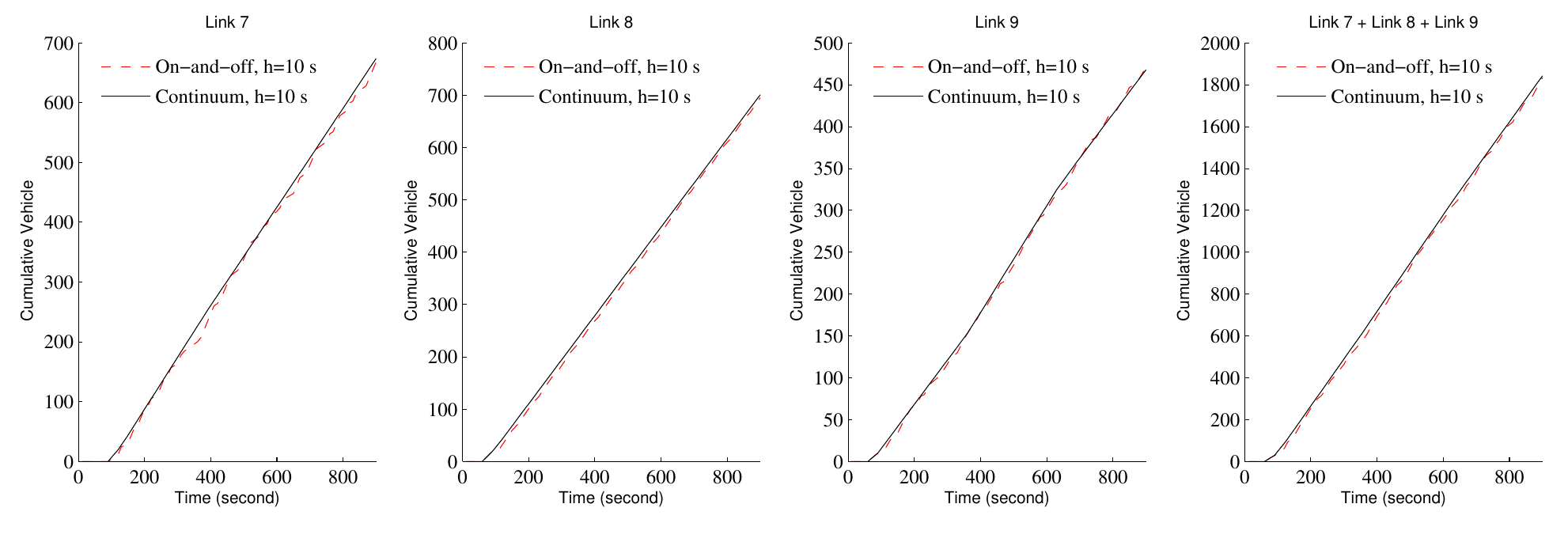}
\end{minipage}
\begin{minipage}{\textwidth}
\centering
\includegraphics[width=\textwidth]{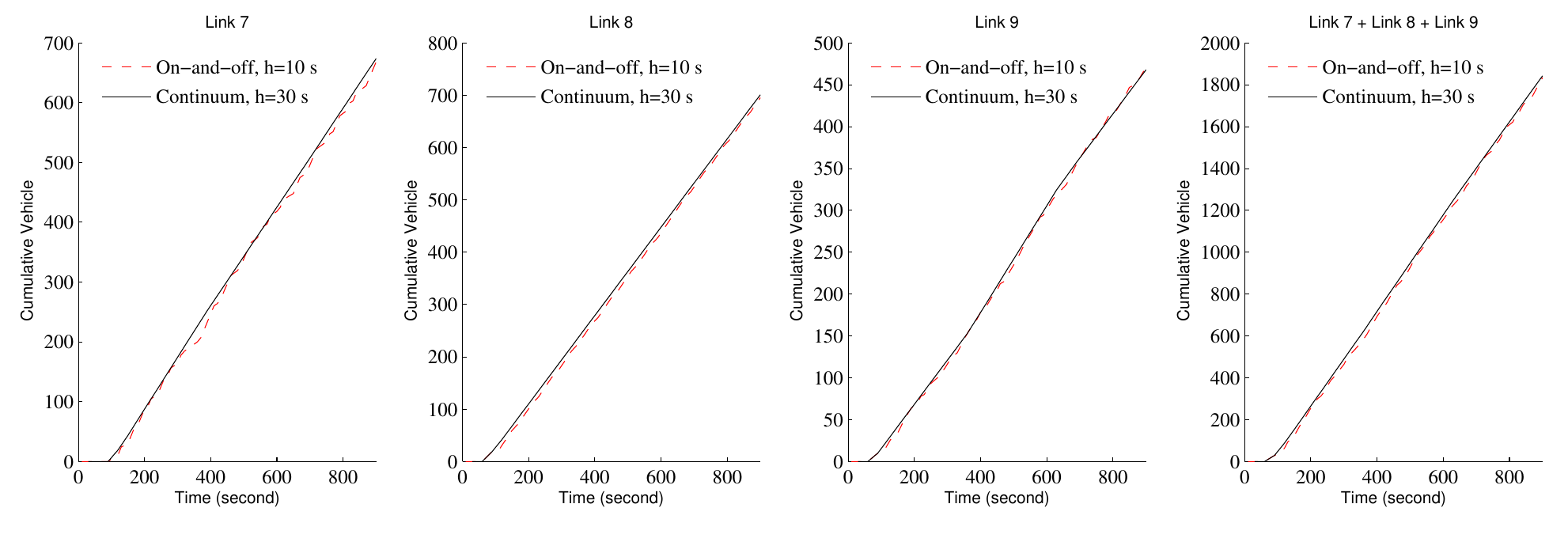}
\end{minipage}
\caption{Comparisons of the cumulative exiting vehicle counts, which are obtained by using the on-and-off signal model and the continuum signal model with the same split parameters. First row: the on-and-off case vs. the continuum case with a time step of 10 seconds. Second row: the on-and-off case vs. the continuum case with a time step of 30 seconds.}\label{figThroughputs}
\end{figure}

\section{Concluding remarks}
\label{secConclusions}
This paper is concerned with a continuum signalized junction model as an approximation of the on-and-off signal model. We provide comprehensive theoretical and numerical results on the asymptotic behavior of the on-and-off signal model and its convergence to the corresponding continuum counterpart as the signal cycle tends to zero. We also provide estimations of the difference between the two types of signal models with non-infinitesimal cycles under various scenarios. The main findings and their implications can be summarized as follows. 1) The continuum signal model with any type of fundamental diagram yields a good approximation of the on-and-off model in the absence of spillback on the signalized network. 2) When spillback occurs somewhere in the network, the continuum approximation may or may not be accurate, depending on the fundamental diagram: if a triangular fundamental diagram is used, then the continuum model does not yield a good approximation; if the fundamental diagram is strictly concave, the continuum model approximates the on-and-off model relatively well and induces much smaller error. Note that even small errors can still be significant in some cases; e.g., when blocks are very short, a maximum error of 20 vehicles could be enough to fill an entire block. Thus, it is up to the analyst to determine if the continuum approximation is appropriate for a particular scenario. The error bounds provided here can be used to make this determination. As shown, these bounds depend on the cycle length and the capacity of the links impacted. In general, intersections with lower capacity links or smaller cycle lengths would be more appropriate for the continuum approximation model.

When spillback happens, the approximation error grows with time no matter what type of fundamental diagram is employed, and that error is closely related to `how concave' the fundamental diagram is. Our technical interpretation of concavity in the fundamental diagram can be streamlined as follows in prose: the more concave a fundamental diagram is, the more nonlinear its characteristic filed is, and the more cancellation to the flow variation it causes, hence the less error we find in the approximation.

It should be noted that our results regarding the approximation efficacy of the continuum model in the presence of spillback are given in a quantitative way; that being said, we make no direct implication of which fundamental diagram is `good' or `bad' in the implementation of the continuum signal model. Rather,  a specific fundamental diagram should be evaluated with, in addition to the error bounds provided in Section \ref{secwithoutspillback} and \ref{secwithspillback},  other modeling or computational considerations and specific application scenarios.

Based on our technical results, we make the following inferences without formal proofs.

\begin{itemize}
\item In deriving the convergence results and the error estimates in the presence of spillback, the assumption of strict concavity needs only apply to the congested branch of the fundamental diagram. In other words, one can choose the uncongested branch arbitrarily as long as the minimum requirements {\bf (F)} are satisfied, and the established results still hold. For example, one may consider a piecewise-defined fundamental diagram with a linear uncongested branch and a strictly concave congested branch. 

\item If a fundamental diagram has a piecewise linear congested branch, then the more the linear pieces, the less error the continuum approximation induces when spillback occurs. This can be explained rather intuitively by the wave-front tracking algorithm \citep{Dafermos, GP}. Nevertheless, the convergence result may not hold for the piecewise linear fundamental diagram in the presence of spillback.

\item The continuum signal model and our methodological framework are easily generalizable to a signal junction with $m$ incoming links and $n$ outgoing links, where $m>1$, $n\geq 1$. One example -- a junction with two incoming links and two outgoing links -- is provided in \ref{secAppsj}.
\end{itemize}

This paper is the first to rigorously analyze the continuum junction model that employs a traffic signal control mechanism, and to provide foundation and guidance for the applications of such model, which is an efficient and flexible alternative to the on-and-off signal model. Results developed in this paper have a positive impact on dynamic traffic assignment, especially on the network performance submodel, which describes flow propagation, flow conservation, and travel delay on signalized networks. In particular, when certain types of DTA problems are to be solved using the continuum signal model, our findings made in this paper provide practitioners with suggestions regarding the choice of fundamental diagrams, depending on whether or not spillback occurs, and with ways of assessing the approximation efficacy of the continuum model, based on the error estimates that we established. Immediate applications of the continuum signal model to DTA are under way. It also remains an important aspect of theoretical investigation to extend our methodological framework to other  traffic flow dynamics, such as the link delay model \citep{Friesz1993} and the Vickrey model \citep{Vickrey, GVM1, GVM2}, and to accommodate more complicated turning movements at junctions.


 \appendix

 \section{Two mixed integer linear programs for optimal signal control} \label{secappdx}
 
\subsection{The link-based kinematic wave model (LKWM)}
Discussion of the LKWM below follows \cite{MIPsignal}, and the resulting discrete-time model is  equivalent to the link transmission model proposed by \cite{LTM}. 

Let us consider a homogeneous link $[a,\,b]$, whose dynamic is governed by the LWR model. A triangular fundamental diagram is used with the same set of notations as given in \eqref{triangularfd}. Define a binary variable $\bar r(t)$ that indicates whether the entrance of the link is in the free-flow phase ($\bar r(t)=0$) or in the congested phase ($\bar r(t)=1$). A similar notation $\hat r(t)$ is used for the exit of the link. We also define the entering flow $\bar q(t)$ and the exiting flow $\hat q(t)$ of the link. The variational theory then asserts that 
\begin{align}
\label{latent3}
\bar r(t)&=\begin{cases}
 1,\quad &\hbox{if}\quad \int_0^t \bar q(\tau)\,d\tau ~=~ \int_0^{t-{L\over w}}\hat q(\tau)\,d\tau +\rho_{j}L\\
0, \quad  &\hbox{if}  \quad  \int_0^t \bar q(\tau)\,d\tau ~<~ \int_0^{t-{L\over w}}\hat q(\tau)\,d\tau +\rho_{j}L
\end{cases}
\\
\label{latent4}
\hat r(t)&=\begin{cases}
 0,\quad &\hbox{if}\quad \int_0^{t-{L\over v}}\bar q(\tau)\,d\tau ~=~ \int_0^t\hat q(\tau)\,d\tau\\
1, \quad  &\hbox{if}\quad \int_0^{t-{L\over v}}\bar q(\tau)\,d\tau ~>~ \int_0^t\hat q(\tau)\,d\tau
\end{cases}
\end{align}
where $v$ and $w$ denote the forward and backward wave speeds respectively, $\rho_j$ denotes the jam density and $L$ denotes the link length.

\subsection{Discrete-time formulation of the traffic dynamics}
We discretize Eqn. \eqref{latent3} and \eqref{latent4} to get the mixed integer program. Let us begin with some discrete-time notations for each link $I_i$, where the superscript $k$ always indicates the time step.
\begin{align*}
\bar q_i^k \qquad\qquad &\hbox{the flow at which vehicles enter link }I_i, \\
\hat q_i^k \qquad\qquad &\hbox{the flow at which vehicles exit link }I_i, \\
\bar r_{i}^k \qquad  \qquad & \hbox{the binary variable that indicates the traffic phase at the entrance of } I_i,\\
\hat r_{i}^k \qquad  \qquad & \hbox{the binary variable that indicates the traffic phase at the exit of } I_i\\
S_i^k \qquad\qquad &\hbox{the supply of link }I_i, \\
D_i^k \qquad\qquad &\hbox{the demand of link }I_i,\\
u_i^k\qquad\qquad &\hbox{the binary signal control variable for link } I_i, \\
\eta_i^k\qquad\qquad &\hbox{the continuum priority parameter for link } I_i,
\end{align*}

Fix a time step size $h$, we define $\Delta^f_i\doteq {L_i\over v_ih}$, $\Delta^b_i\doteq {L_i\over w_ih}$. Note that both $\Delta^f_i$ and $\Delta^b_i$ are rounded up to the nearest integer if they are not already integers. We are now ready to state the discrete versions of \eqref{latent3}-\eqref{latent4} as follows.
\begin{align}
\label{panding1}
&\begin{cases}
\displaystyle h\sum_{k=1}^{l-\Delta^b_i} \hat q_i^k -h\sum_{k=1}^{l}\bar q_i^k+\rho_{j,i}L_i~=~\mathcal{M}\,(1-\bar r_i^l)
\\
\displaystyle h\sum_{k=1}^{l-\Delta^b_i} \hat q_i^k -h\sum_{k=1}^{l}\bar q_i^k+\rho_{j,i}L_i~>~-\mathcal{M}\,\bar r_i^l
\end{cases}\qquad\forall i,~~\forall l
\\
\label{panding2}
&\begin{cases}
\displaystyle h\sum_{k=1}^{l-\Delta^f_i} \bar q_i^k -h\sum_{k=1}^{l}\hat q_i^k~=~\mathcal{M}\,\hat r_i^l\\
\displaystyle h\sum_{k=1}^{l-\Delta^f_i} \bar q_i^k -h\sum_{k=1}^{l}\hat q_i^k~>~\mathcal{M}\,(\hat r_i^l-1)
\end{cases}\qquad \forall i,~~\forall l
\end{align}
where $\mathcal{M}>0$ is a large constant, $\rho_{j, i}$ and $L_i$ denote respectively the jam density and length of link $I_i$.  Throughout this section, we stipulate that the entrance of every link of the network remains in the uncongested phase so that spillback does not occur. There are two reasons for this: 1) the non-spillback situation is reasonable  to maintain in a signal optimization process; 2) the continuum model yields a good approximation of the on-and-off model in the absence of spillback. With this in mind, we must have $\bar r_i^k\equiv 0$ for all $i$ and $k$, thus \eqref{panding2} reduces to
\begin{equation}
\label{panding2'}
\displaystyle h\sum_{k=1}^{l-\Delta^f_i} \bar q_i^k -h\sum_{k=1}^{l}\hat q_i^k~=~0\qquad \forall i,~~\forall l
\end{equation}
Moreover, the demand $D_i^k$, whose continuous-time expression is given by \eqref{demanddef}, is now determined via the following inequalities, where $C_i$ denotes the flow capacity of link $I_i$
\begin{equation}\label{inqdemand}
\begin{cases}
C_i+\mathcal{M}(\hat r_i^k-1)~\leq~D_i^k~\leq~C_i
\\
\bar q_i^{k-\Delta^f_i}-\mathcal{M}\hat r_i^k~\leq~D_i^k~\leq~\bar q_i^{k-\Delta_i^f}+\mathcal{M}\hat r_i^k \qquad\forall i,~~\forall k
\end{cases}
\end{equation}

\subsection{Dynamics at signalized junctions}\label{secAppsj}

We relate our expression of the discrete network dynamics to two specific types of signalized junctions depicted in Figure \ref{figtwontw}. 
\begin{figure}[h!]
\centering
\includegraphics[width=.7\textwidth]{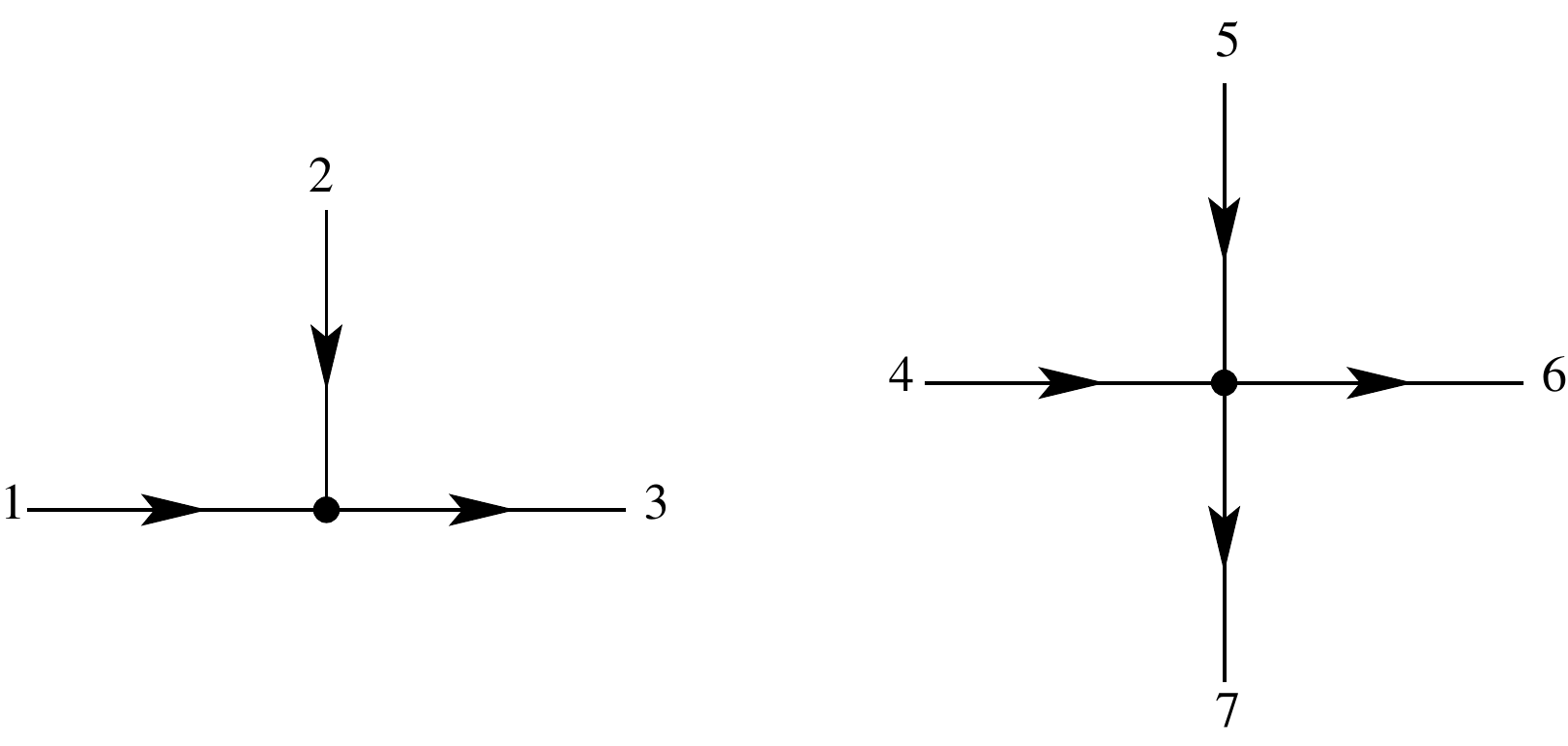}
\caption{Two signalized junctions}
\label{figtwontw}
\end{figure}

\begin{itemize}
\item  Regarding the merge junction on the left of Figure \ref{figtwontw}, the continuous-time dynamics are already given by \eqref{oaodef} and \eqref{continuumdef} for the on-and-off model and the continuum model respectively. Therefore, the discrete-time formulations are:
\begin{align}
\label{oaodefdis3}
\left.
\begin{array}{l}
\hat q_1^k~=~\min\big\{D_1^k,\,\min\{C_1,\,C_3\}\cdot u_1^k\big\}
\\
\hat q_2^k~=~\min\big\{D_2^k,\,\min\{C_2,\,C_3\}\cdot u_2^k\big\}
\\
\bar q_3^k~=~\hat q_1^k+\hat q_2^k
\end{array}
\right\} \qquad\hbox{On-and-off model}
\\ \nonumber
\\
\label{continuumdefdis3}
\left.
\begin{array}{l}
\hat q_1^k~=~\min\big\{D_1^k,\,\min\{C_1,\,C_3\}\cdot \eta_1^k\big\}
\\
\hat q_2^k~=~\min\big\{D_2^k,\,\min\{C_2,\,C_3\}\cdot \eta_2^k\big\}
\\
\bar q_3^k~=~\hat q_1^k+\hat q_2^k
\end{array}
\right\} \qquad\hbox{Continuum model}
\end{align}

\item For the intersection on the right of Figure \ref{figtwontw}, we need to introduce additional turning rates $\alpha_{4,6}+\alpha_{4,7}=1$, $\alpha_{5,6}+\alpha_{5,7}=1$. It is straightforward to verify that the on-and-off and the continuum models are: 
\begin{align}
\label{oaodef4}
\left. 
\begin{array}{l}
\hat q_4^k~=~\min\big\{D_4^k,\,\min\{C_4,\,{C_6\over \alpha_{4,6}},\,{C_7\over \alpha_{4,7}}\}\cdot u_4^k\big\}
\\
\hat q_5^k~=~\min\big\{D_5^k,\,\min\{C_5,\,{C_6\over\alpha_{5,6}},\,{C_7\over \alpha_{5,7}}\}\cdot u_5^k\big\}
\\
\bar q_6^k~=~\alpha_{4,6}\hat q_4^k+\alpha_{5,6}\hat q_5^k,\qquad \bar q_7^k~=~\alpha_{4,7}\hat q_4^k+\alpha_{5,7}\hat q_5^k
\end{array}
\right\} \quad\hbox{On-and-off model}
\\ \nonumber
\\
\label{continuumdef4}
\left. 
\begin{array}{l}
\hat q_4^k~=~\min\big\{D_4^k,\,\min\{C_4,\,{C_6\over \alpha_{4,6}},\,{C_7\over \alpha_{4,7}}\}\cdot \eta_4^k\big\}
\\
\hat q_5^k~=~\min\big\{D_5^k,\,\min\{C_5,\,{C_6\over\alpha_{5,6}},\,{C_7\over \alpha_{5,7}}\}\cdot \eta_5^k\big\}
\\
\bar q_6^k~=~\alpha_{4,6}\hat q_4^k+\alpha_{5,6}\hat q_5^k,\qquad \bar q_7^k~=~\alpha_{4,7}\hat q_4^k+\alpha_{5,7}\hat q_5^k
\end{array}
\right\} \quad\hbox{Continuum model}
\end{align}
\end{itemize}

It remains to express the operator $\min(\cdot)$ appearing in \eqref{oaodefdis3}-\eqref{continuumdef4} as a set of linear inequalities by using additional binary variables, which, due to space limitation, will not be elaborated in this paper. The reader is referred to \cite{MIPsignal} for more detail.

Finally, one has a lot of flexibility in choosing the objective function once the constraints are articulated as above. For our specific example presented in Section \ref{secexperiment} and Figure \ref{figComparisonntw}, the following linear objective function is selected: 
\begin{equation}\label{mipobj}
\hbox{maximize}~~\sum_{k=1}^N {1\over k+1}\left(\hat q_7^k+\hat q_8^k+\hat q_9^k\right)
\end{equation}
where $N$ is the total number of time intervals. Choosing such objective function ensures that the throughput of the network is maximized at any instance of time.

To summarize, for the problem of finding optimal signal timing that avoids spillback, the mixed integer linear program with the on-and-off signal model is given by \eqref{panding1}, \eqref{panding2'}, \eqref{inqdemand}, \eqref{oaodefdis3}, \eqref{oaodef4} and \eqref{mipobj}; the MILP with the continuum model is given by \eqref{panding1}, \eqref{panding2'}, \eqref{inqdemand}, \eqref{continuumdefdis3}, \eqref{continuumdef4} and  \eqref{mipobj}. Notice that both programs may be subject to some additional constraints, e.g., no conflict in signal lights, upper and lower bounds on green and red time, and so forth. These are quite straightforward and are omitted from this paper.

\bibliographystyle{model2-names}
\bibliography{<your-bib-database>}



\end{document}